\crefname{hypothesis}{Hypothesis}{Hypotheses}
\title{General Feasibility Bounds for Sample Average Approximation via Vapnik-Chervonenkis Dimension\thanks{Submitted.
\funding{This work was partially supported by the National Science Foundation under grants CAREER CMMI-1834710 and IIS-1849280.}}}
\author{ Henry Lam \thanks{Department of Industrial Engineering and Operations Research, Columbia University, New York, NY 10027 
  (\email{khl2114@columbia.edu}).} 
  \and Fengpei Li \thanks{Corresponding author. Department of Industrial Engineering and Operations Research, Columbia University, New York, NY 10027. Current address: Morgan Stanley Machine Learning Research, New York, NY 10019 (\email{fl2412@columbia.edu}).} }
\begin{document}

\maketitle

% REQUIRED
\begin{abstract}
  We investigate the feasibility of sample average approximation (SAA) for general stochastic optimization problems, including two-stage stochastic programs without relatively complete recourse. We utilize results from the \textit{Vapnik-Chervonenkis} (VC) dimension and \textit{Probably Approximately Correct} (PAC) learning to provide a general framework to construct feasibility bounds for SAA solutions under minimal structural or distributional assumption. We show that, as long as the hypothesis class formed by the feasible region has a finite VC dimension, the infeasibility of SAA solutions decreases exponentially with computable rates and explicitly identifiable accompanying constants. We demonstrate how our bounds apply more generally and competitively compared to existing results. 
  
%   route  the wide applicability of our framework Moreover, we express these feasibility bounds in terms of explicit constants in all settings previously considered in SAA feasibility and demonstrate the wide applicability of our framework.
\end{abstract}

% $\{\{\xi: f(\xi,x)<+\infty\}\}_{x\in\mathcal X}$ 
% REQUIRED
\begin{keywords}
  sample average approximation, feasibility, sample complexity, two-stage stochastic programming, Vapnik-Chervonenkis dimension
\end{keywords}

% REQUIRED
\begin{AMS}
  90C15, 03E75
\end{AMS}

\section{Introduction}
Consider the stochastic optimization problem
\begin{equation}\label{sp}
\inf_{x\in\mathcal X} F(x) \triangleq \mathbb E [f(\xi,x)],
\end{equation}
where $\mathcal X $, the space for decision variables, is a nonempty closed subset of a Polish space, i.e., a complete, separable, metric space (e.g., $\mathcal X \subseteq \mathbb R^n$ or $\mathbb R^{n-p}\times \mathbb Z^{p}$ for mixed-integer decision sets, with Euclidean metric) equipped with the Borel $\sigma$-algebra $\mathscr B$ and $\xi:\Omega \rightarrow \Xi\subseteq \mathbb R^{r}$ is some random vector on a complete probability space $(\Omega,\mathcal F,\mathbb P)$. For each realization of $\xi(\omega)\in\Xi$ (henceforth we suppress the dependence of $\xi$ on $\omega\in\Omega$), we assume $f(\xi,\cdot) :\mathcal X\rightarrow \mathbb R \cup \{+\infty\}$ is a lower semicontinuous function mapping to the extended real line and the function $f(\xi,x)$ is measurable with resepct to the completion of product $\sigma$-algebra $\mathcal F \otimes \mathscr B$. We also assume the set $\{x: x\in\mathcal X \text{ and } F(x)<+\infty\}$ is non-empty. Moreover, we assume all quantities of interest are measurable and defer the technical arguments on measurability to the Appendix.

The class of problems under \eqref{sp} is difficult to evaluate in general, especially for high-dimensional $\xi$.  As a popular tractable approximation, the sample average approximation (SAA) method \cite{shapiro2014lectures} solves the sampling-based counterpart of \eqref{sp}:
\begin{equation}\label{saa}
\inf_{x\in\mathcal X} \hat F_N(x) \triangleq \frac{1}{N}\sum_{i=1}^N f(\xi_i, x),
\end{equation}
where $\xi^{[N]}\triangleq (\xi_1,\xi_2,...,\xi_N) $ are IID samples drawn from $\mathbb P$. An optimal solution of the SAA depends on the realization of $\xi^{[N]}$ and shall be denoted $x^\star(\xi^{[N]})$.  Theoretical properties and numerical performances of SAA have been extensively studied in, e.g., \cite{linderoth2006empirical,shapiro2014lectures,higle2013stochastic}, and its applications in stochastic optimization and chance-constrained programming can be found in, e.g., \cite{jirutitijaroen2008reliability,pagnoncelli2009sample, wang2008sample}. As an important class of \eqref{sp}, two-stage stochastic programming has applications in transportation planning \cite{barbarosoglu2004two,liu2009two}, disaster management \cite{noyan2012risk}, water recourse management \cite{huang2000inexact} and inventory management \cite{dillon2017two}. Most of these studies rely on the assumption that $f(\xi, x)<+\infty$ with probability 1 for all $x\in\mathcal X$, which is called the relatively complete recourse condition. However, in many real-world applications, this condition is restrictive and there has been a growing literature studying two-stage stochastic programming without this assumption, i.e., $\mathbb P(\{\xi\in \Xi:f(\xi, x)=+\infty\})>0$ for some $x\in\mathcal X$ (see \cite{chen2019sample, chen2019convergence, liu2019feasibility}). In such setting, the SAA solution $x^\star(\xi^{[N]})$ is not necessarily feasible for the original problem \eqref{sp} and it would be desirable to quantify the level of infeasibility for the SAA solution.

% (not limited to the two-stage stochastic programming)
As nicely discussed in the recent works of \cite{chen2019sample, liu2019feasibility}, the feasibility issue of SAA arises when $f(\xi,\cdot)$ maps to the extended real line. Following the notations in \cite{liu2019feasibility}, we define $\text{dom } f_\xi\triangleq\{x\in\mathcal X: f(\xi, x) < +\infty \}$. Then by solving \eqref{saa} we would obtain an optimal solution $x^\star(\xi^{[N]}) \in \text{ dom } \hat {F}_N \triangleq \bigcap_{i=1}^N \text{ dom }f_{\xi_i}$ where $\text{dom }\hat {F}_N $ is the feasible region for the SAA. As we have assumed the set $\{x:x\in\mathcal X \text{ and } F(x)<+\infty\}$ is non-empty, the SAA feasible region is non-empty with probability 1 (since $F(x)<+\infty$ implies $\mathbb P(f(x,\xi)=+\infty)=0$ and in turn $x\in\text{dom } \hat F_N \text{ with probability 1}$). However, $x^\star (\xi^{[N]})$ may not be feasible for the original problem \eqref{sp}, i.e., $x^\star (\xi^{[N]}) \notin \text{dom } F \triangleq \{x\in\mathcal X: F(x) < +\infty\}$. In other words, defining the violation probability $V(x)$ for $x\in\mathcal X$ as
\begin{equation}
V(x)\triangleq \mathbb P(\xi\in\Xi: x\notin \text{dom }f_\xi),
\end{equation}
we could have $V(x^\star (\xi^{[N]}))>0$ with positive probability.

In this paper, we utilize a framework based on the Vapnik-Chervonenkis (VC) dimension to analyze the feasibility of SAA solutions, including two-stage stochastic programming. Following  \cite{chen2019sample,liu2019feasibility}, we focus on showing the exponential decrease of $V(x^\star (\xi^{[N]}))$ as $N$ grows. Specifically, letting $\mathbb P^N$ denote the sampling measure governing the generation of IID samples $\xi^{[N]}$ (notice the feasibility of $x^\star (\xi^{[N]})$ is random depending on $\xi^{[N]}$), we derive exponential bounds for $V(x^\star(\xi^{[N]}))$ under $\mathbb P^N$. As a key contribution, we show that, when the VC dimension of the feasible domain is finite, our framework produces feasibility bounds that are both general and explicit. In particular, the constants in the bounds are computable with respect to problem parameters. Moreover, aside from finite VC dimension of the feasible domain, we impose no additional requirement on the geometric or distributional properties of \eqref{sp} (i.e., whether the problem is convex or linear, whether the optimal solution is in the interior or on the boundary of $\text{dom } F$, whether $\mathcal X$ is finite or functions $\{f(\xi,\cdot)\}_{\xi\in\Xi}$ have a chain-constrained domain, as in \cite{liu2019feasibility}), or specific regularity conditions on $f(\xi, x)$ (i.e., Lipschitz continuity or the existence of certain moment generating function, as in \cite{chen2019sample, liu2019feasibility}). Consequently, the analysis is widely applicable in both scenarios where some of the best-known results on SAA feasibility have been presented, and other scenarios where no related results have been established. Furthermore, the feasibility result under this framework is not restricted to the optimal solution of SAA, but any generic point within the SAA feasible region with probability 1. As a result, when the SAA problem is non-convex and solvable only up to local optimum, or when approximate algorithms are required, our feasibility guarantee would still hold. Finally, we show that the generality of this framework does not come at a cost of worse sample complexity as our bounds are comparable to, if not better than existing ones. 

There exists a large body of literature which leverages techniques from statistical/computational learning theory and empirical process theory  to analyze the performance of SAA, including (asymptotic) convergence rate of optimal solution and (non-asymptotic)  concentration inequalities. The seminal work of \cite{shapiro2005complexity} surveys the sample complexity of two-stage stochastic (linear) programming in obtaining so-called $\epsilon$-optimal solutions using large deviations theory or uniform law of large numbers. In \cite{oliveira2017sample1, oliveira2017sample2}, Talagrand’s ``generic chaining" tail bounds and ``localization” techniques for sub-Gaussian processes (see also \cite{talagrand2014upper}) are used to derive non-asymptotic risk bounds for the SAA optimal solution. Moreover, in \cite{ermoliev2013sample}, the Rademacher averages of functional class, a concept closely related to the VC dimension, is used to established convergence results for SAA in general compound stochastic optimization
problems. As we can see, the concept of using statistical learning theory/complexity to analyze SAA is not entirely new. However, these results largely focus on the optimization accuracy of SAA as in \cite{shapiro2005complexity}, or the convergence rate of optimal SAA solution. In contrast, our paper provides a first application of the VC dimension and PAC-learning in analyzing the feasibility of SAA solutions (optimal or otherwise). To this end, a related application is studied in \cite{de2004constraint,erdougan2006ambiguous} under the context of chance-constrained programming, where the feasibility of constraint sampling is analyzed. It is worth noting that, the Rademacher complexity theory, as an alternative to the VC theory, typically obtain tighter (usually by a logarithmic factor) generalization error bounds which in fact subsume bounds provided by the VC theory \cite{kaariainen2004relating}. However, the standard
method for evaluating Rademacher complexity relies on certain empirical risk minimization (ERM) algorithm \cite{kaariainen2004relating} which may not be tractable, and one common way to bound the data-dependent Rademacher complexity is directly by the VC dimension. On the other hand, for the purpose of analyzing SAA feasibility, VC bounds are sufficient to establish results that are either new or comparable/superior to known ones. More importantly, the bounds we achieve do not depend on any data distribution, and carry explicit constants rather than a standalone big-$O$ notation as in typical prior results on SAA feasibility or learning theory analysis. Thus, in this paper we focus on VC bounds which, although not the tightest possible (we do remark on where the bounds can be improved when appropriate), offer a great generality and a wide applicability, while maintaining a relatively simple, concise appeal.
 
 Finally, we mention a growing body of literature exploiting low-dimensional structures (i.e., sparsity or low rankness) in high-dimensional problems to achieve a sample complexity that is logarithmic-in-dimension, instead of the usual linear-in-dimension sample complexity for the optimality of SAA solutions \cite{bugg2021logarithmic, liu2019regularized, liu2019sample}. As we shall see, these improvements correspond to reduced VC dimensions for low-dimensional structures and also apply to our analysis of SAA feasibility. 
% is manifested by its ability to

The rest of the paper is organized as follows. In Section 2, we review the recent papers with closely related results, especially \cite{chen2019sample,liu2019feasibility}. In Section 3, we present the concepts of our framework and main result. Section 4 focuses on the applications of our main result. In the first part of this section, we present three novel results on the feasibility of two-stage stochastic programs, first a general feasibility result (not limited to linear ones), second a feasibility bound for mixed-integer recourse, and third an exploitation of low-dimensional structures to obtain a logarithmic-in-dimension sample complexity in high-dimensional two-stage stochastic programs. In the second part of Section 4, we consider two special structures used to provide feasibility results in the existing literature with whom we compare our results. The first is the chain-constrained domain considered in \cite{liu2019feasibility}. The second is a finite feasible domain considered in \cite{chen2019sample}. Finally, in the Appendix, we provide a specially constructed example that allows a direct verification of our feasibility results.

% ,  discussed in detail in \cite{chen2019sample, liu2019feasibility}

\section{Review of Related Results}
We discuss the existing results on SAA feasibility in \cite{chen2019sample, liu2019feasibility}. A considerable part of $\cite{chen2019sample}$ discusses how to solve a so-called ``padded", modified version of SAA to obtain a \textit{complete feasible} solution (i.e. $V(x)=0$) with high confidence, which is somewhat different from the perspectives of our paper and \cite{liu2019feasibility}. In particular, we consider the feasibility for SAA in its original form and do not restrict our attention to complete feasible solutions. Both \cite{chen2019sample} and \cite{liu2019feasibility} discuss results of the form
\begin{equation}
	\mathbb P^N(V(x^\star(\xi^{[N]}))>\epsilon) \leq \delta,
\end{equation}
referred to as \textit{high recourse likeihood solution} in \cite{chen2019sample}. In particular, \cite{chen2019sample} presents these bounds in two cases, one for finite $\mathcal X$ and another for two-stage stochastic linear programs. We shall discuss them in detail in Section 4 when we compare with our results. On the other hand, the feasibility results in \cite{liu2019feasibility} are more general and can be summarized into three different scenarios.
\begin{itemize}
	\item Scenario 1: In the presence of the so-called chain-constrained domain of order $m$ (to be explained later) on $\text{dom } f_\xi$, \cite{liu2019feasibility} shows 
\begin{align*}
	\mathbb P^N(V(x^\star(\xi^{[N]}))>\epsilon) &\leq \sum_{k=0}^{m-1}  {N \choose k} \epsilon^k(1-\epsilon)^{N-k} \nonumber\\
	&\leq \exp\bigg\{-\frac{(N\epsilon-m+1)^2}{2N\epsilon}\bigg\}.
\end{align*}
The second inequality is given by the Chernoff bound, also shown in \cite{campi2011sampling} and \cite{liu2019feasibility}.

\item Scenario 2: In the context of  convexity, meaning $\mathcal X$ is closed and convex and the set of optimal solutions $\mathcal X^\star$ is non-empty, and $f(\xi, \cdot)$ is convex for all $\xi\in\Xi$, along with additional regularity conditions on $f(\xi, \cdot)$ and $\mathcal X$ (e.g., finite moment generating function for $f(\xi,x)$), \cite{liu2019feasibility} shows that for $\mathcal X^\star$ in the interior of $\text{dom } F$, 
\begin{equation*}
	\mathbb P^N(V(x^\star(\xi^{[N]}))>0) \leq Ce^{-N\beta},
\end{equation*}
where $C$ and $\beta$ are unknown constants.

\item Scenario 3:
In the context of  convexity, if $\text{dom } f_\xi$ is a chain-constrained domain as in Scenario 1, along with the additional regularity conditions, \cite{liu2019feasibility} shows that for $\mathcal X^\star$ which may have non-empty intersection with the boundary of $\text{dom } F$, one can still guarantee
\begin{align*}
	\mathbb P^N(V(x^\star(\xi^{[N]}))>\epsilon) &\leq  Ce^{-N\beta}+\sum_{k=0}^{|\mathcal J|-1}  {N \choose k} \epsilon^k(1-\epsilon)^{N-k} \nonumber\\
	&\leq Ce^{-N\beta}+\text{exp} \bigg\{-\frac{(N\epsilon-|\mathcal J|+1)^2}{2N\epsilon}\bigg\}
\end{align*}
where $C$ and $\beta$ are again unknown constants as in Scenario 2 and $\mathcal J$ is the index set of active constraints at $\mathcal X^\star$ with the boundary of $\text{dom } F$. Notice it is shown in \cite{campi2008exact} that $|\mathcal J|$ is bounded by $n$, the dimension of the decision variable, which yields a useful upper bound regardless of the behavior of $\mathcal J$ (Also note that in this case the order of the chain-constrained domain does not show up explicitly in the bound).
\end{itemize}

In all scenarios, a desirable exponential decrease of $V(\cdot)$ as $N$ grows can be shown. However, there are several potential limitations. First, there exist hidden constants in the feasibility bound: In Scenarios 2 and 3, which are of great importance in stochastic convex programming, the rate of exponential decrease is established but governed by unknown constants $C$ and, more importantly $\beta$ which directly dictates the rate of exponential decrease. Second, the dependence of the bound on $m$, the order of the chain-constrained structure, can become potentially restrictive as $m$ gets large (or even infinite) in many cases, a challenge that is also mentioned in \cite{chen2019sample}.  Furthermore, even though it is motivated from practical examples in \cite{liu2019feasibility}, the chain-constrained structure can be difficult to verify in general. Notably, the feasibility bound derived in Scenario 3 (the optimal solution of \eqref{sp} intersects the boundary of $\text{dom } F$) is not explicitly dependent on the chain order $m$, but the chain-constrained structure is still required for the analysis in \cite{liu2019feasibility}. Finally, note that while an explicit bound (all constants known or verifiable) is presented in Scenario 1, it is a feasibility bound on the entire $\text{dom } \hat F_N$ instead of just $x^\star(\xi^{[N]})$, and is still under the chain-constrained domain assumption. Due to all these limitations, it is desirable to generalize the feasibility bound beyond the chain-constrained domain setting and with explicit constants.

\section{Framework and Main Results}
In this section we review the VC dimension framework and introduce our main results. In particular, we are interested in the VC dimension of a collection of subsets on $\Xi$. This approach gives bounds for any generic point in $\text{dom } \hat F_N$, the feasible region of the SAA, which in particular implies bounds for $x^\star(\xi^{[N]})$. Moreover, 
% for the joint set $\{(x,\xi): f_\xi (x) < +\infty\}$, 
instead of looking at $\text{dom } f_\xi=\{x\in\mathcal X: f(\xi, x) < +\infty \}$, we investigate
\begin{equation}\label{domm}
 H_x\triangleq\{\xi\in\Xi: f(\xi,x)<+\infty \}  \text{\ \ for\ } x\in\mathcal X
\end{equation}
and define the class of subsets
\begin{equation*}\label{dommm}
	\mathcal H\triangleq \{H_x\}_{x\in\mathcal X} \cup \{\Xi\}.
\end{equation*}

%  $H$ from $\mathcal X \rightarrow \{0,1\}$ ,  based on how they many points they can ``shatter" generalize easily
% As we shall see, analysis based on the VC dimension of $H$ offers much more flexibility. 
Consider the VC dimension of $\mathcal H$. The VC dimension is commonly used to describe the complexity of a collection of sets or functions \cite{anthony1997computational,kearns1994introduction,vapnik2013nature}, which is also known as the ``hypothesis space" in computational learning theory. The concept applies to a class of subsets $\mathcal H$ (see \cite{van2009note}), and can be naturally generalized to binary functions. To define the VC dimension of a class of subsets $\mathcal H$, first we say a set of points $\{x_1,...,x_d\}$ is ``shattered" by $\mathcal H$ if any subset of $\{x_1,...,x_d\}$ can be ``picked out" by some element $C\in \mathcal H$, meaning that for any subset $D \subseteq \{x_1,...,x_d\}$, there is a set $C \in\mathcal  H$ such that $D \subseteq C$ and $(\{x_1,...,x_d\}\setminus D) \cap C =\emptyset$. The VC dimension of $\mathcal H$ is defined to be the maximal cardinality of the sets it can shatter, denoted by $d_{VC}(\mathcal H)$. For example, some classical results on the VC dimensions are

\begin{itemize}

	\item Positive intervals: If $\mathcal H=\big\{\big\{x\in\mathbb R: x\in [a,b]\big\}|0\leq a\leq b\big\}$, we have $d_{VC}(\mathcal H)=2$.

		\item Affine hyperplanes (Perceptrons): If $\mathcal H=\big\{\big\{ x\in\mathbb R^d : a^T x+b \geq 0 \big\}| a\in\mathbb R^d, b\in \mathbb R\big\}$, we have $d_{VC}(\mathcal H)=d+1$. If $b$ is fixed to be 0, then $d_{VC}(\mathcal H)=d$.
	\item Convex sets: If $\mathcal H=\big\{C: C \subseteq \mathbb R^d \text{ and } C \text{ is convex}\big\}$ and $d\geq 2$, then $d_{VC}(\mathcal H)=+\infty$.

\end{itemize}
% \text{ for } a\in \mathbb R^d \text{ and } b\in \mathbb R
An important concept in computational learning theory tightly related to the VC dimension is \textit{Probably Approximately Correct} (PAC) learning (see, e.g. \cite{kearns1994introduction}). In this context, the VC dimension of $\mathcal H$ can be used to derive bounds on the sample complexity needed to achieve a desired level of accuracy between ``in-sample-error" and ``generalization error" within class $\mathcal H$ (see, e.g. \cite{anthony1997computational,blumer1989learnability,kearns1994introduction}). As it turns out, this type of results can transfer towards sample complexity results on the feasibility of SAA solutions.

 Moreover, we note that the $\Xi$ in \eqref{domm} can be reparametrized and does not have to be viewed in $\mathbb R^r$ for fixed $r$ defined in \eqref{sp}. For illustration, consider the following example.

 \begin{example}\label{example}
  Suppose $x\in\mathcal X \subseteq \mathbb R$ and $\xi$ is a random vector defined on $\mathbb R^r$ for some $r>0$. Let $g(\cdot): \mathbb R^r \rightarrow \mathbb R$ be a given function. Then, suppose $f(\xi,x)<+\infty$ if and only if $g(\xi)\cdot x\geq 1$. Then, $H_x$ in \eqref{domm} becomes
  \begin{equation*}
      H_x=\{\xi: g(\xi)\cdot x \geq 1\}\subseteq \mathbb R^r, \forall x \in \mathcal X.
  \end{equation*}
 On the other hand, if we define random variable $\xi'=g(\xi)$ on $\mathbb R$, then we can alternatively define
 \begin{equation*}
      H'_x=\{\xi': \xi'\cdot x \geq 1\}\subseteq \mathbb R, \forall x \in \mathcal X.
  \end{equation*}
 Consequently, instead of fixing a canonical representation of $\xi$ in \eqref{sp}, we sometimes utilize this flexibility to change representations for convenience.
 \end{example}
 
 Finally, we clarify the use of notation $[\cdot]$. For a positive integer $q$, $[q]$ denotes the set $\{1,..., q\}$. For a vector $ \boldsymbol v\in\mathbb R^q$, $[\boldsymbol v]_j$ denotes the $j$-th component of $\boldsymbol v$, for $j\in[q]$, and $\|\boldsymbol v\|_0$ denotes the number of non-zero entries of $\boldsymbol v$.

\subsection{Main Result}
We now present our main theorem on SAA feasibility and its proof.

\begin{theorem}\label{main}
	Let $\mathcal H\triangleq \{H_x\}_{x\in\mathcal X}\cup\{\Xi\}$ be the class of subsets defined in \eqref{domm} and $\xi^{[N]}=\{\xi_1,...,\xi_N\}$ be IID samples from $\mathbb P$ (consequently $\xi^{[N]}\sim \mathbb P^N$). Assume $\mathcal H$ has finite VC dimension $d$ (i.e., $d_{VC}(\mathcal H)=d<+\infty$). Moreover, assume the feasible region of SAA (i.e., $\text{dom } \hat F_N=\cap_{i=1}^N \{x\in\mathcal X: f(\xi_i,x)<+\infty\}$) is non-empty with probability 1 and $x^\star (\xi^{[N]})\in\mathcal X$ is the output of any algorithm that is guaranteed to be within the feasible region of SAA. Then, if  
	\begin{equation}\label{sc}
		N \geq \frac{4}{\epsilon}\bigg(d\log\big(\frac{12}{\epsilon}\big)+\log\big(\frac{2}{\delta}\big)\bigg),
	\end{equation}
	we have, for any $0<\delta,\epsilon <1$,
		\begin{equation}\label{meaissue}
\mathbb P^N \bigg(\sup_{x\in\text{dom } \hat F_N} V(x) > \epsilon\bigg) \leq  \delta,
	\end{equation}
and \footnote{The measurability of \eqref{meaissue} and \eqref{mea2} is discussed in the Appendix.} consequently 
	\begin{equation}\label{mea2}
\mathbb P^N\bigg(V(x^\star (\xi^{[N]}))>\epsilon\bigg) \leq  \delta.
	\end{equation}
\end{theorem}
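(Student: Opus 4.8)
The plan is to translate the feasibility question into a classical PAC-learning statement about the hypothesis class $\mathcal H$ and then invoke the standard $(\epsilon,\delta)$ sample-complexity bound for consistent learners (the ``$\epsilon$-net'' bound of Blumer, Ehrenfeucht, Haussler, and Warmuth). The key observation is that for a fixed $x\in\mathcal X$, the complement $H_x^c=\{\xi\in\Xi: f(\xi,x)=+\infty\}$ has $\mathbb P$-measure exactly $V(x)$, while membership of $x$ in $\text{dom }\hat F_N$ is precisely the event that none of the samples $\xi_1,\dots,\xi_N$ falls in $H_x^c$ — i.e., $H_x$ is ``consistent'' with the sample. So the event $\{\sup_{x\in\text{dom }\hat F_N}V(x)>\epsilon\}$ is contained in the event that there exists a set in $\mathcal H$ (equivalently, a complement of one) that is consistent with all $N$ samples yet has true measure exceeding $\epsilon$ — exactly the event that the sample fails to be an $\epsilon$-net for the dual class $\{H_x^c\}$.

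The steps, in order, would be: (i) Fix $\epsilon,\delta\in(0,1)$ and define the ``bad'' event $B \triangleq \{\exists x\in\mathcal X: V(x)>\epsilon \text{ and } \xi_i\in H_x \text{ for all } i\in[N]\}$; show that $\{\sup_{x\in\text{dom }\hat F_N}V(x)>\epsilon\}\subseteq B$, using that $x\in\text{dom }\hat F_N$ iff $f(\xi_i,x)<+\infty$ for all $i$, i.e. $\xi_i\in H_x$ for all $i$. (ii) Recall that the VC dimension is invariant under complementation, so the class $\mathcal H^c\triangleq\{H_x^c\}_{x\in\mathcal X}\cup\{\emptyset\}$ also has VC dimension $d$; the event $B$ says the sample misses some set of $\mathcal H^c$ of measure $>\epsilon$, i.e. $\xi^{[N]}$ is not an $\epsilon$-net for $\mathcal H^c$. (iii) Apply the $\epsilon$-net theorem: for a class of VC dimension $d$, $\mathbb P^N(\xi^{[N]}\text{ is not an }\epsilon\text{-net})\le 2\,\Pi_{\mathcal H^c}(2N)\,2^{-\epsilon N/2}$, and then bound the growth function $\Pi_{\mathcal H^c}(2N)$ via the Sauer–Shelah lemma by $(2eN/d)^d$; a routine computation shows the stated $N\ge\frac{4}{\epsilon}\big(d\log\frac{12}{\epsilon}+\log\frac{2}{\delta}\big)$ forces this quantity below $\delta$, giving \eqref{meaissue}. (iv) Finally, since the SAA feasible region is non-empty with probability $1$ and any admissible algorithm returns $x^\star(\xi^{[N]})\in\text{dom }\hat F_N$, we have $V(x^\star(\xi^{[N]}))\le\sup_{x\in\text{dom }\hat F_N}V(x)$ almost surely, so \eqref{mea2} follows immediately from \eqref{meaissue}.

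I would either cite the $\epsilon$-net theorem and Sauer–Shelah directly (e.g. from \cite{blumer1989learnability,anthony1997computational}) or, if a self-contained argument is preferred, reproduce the standard symmetrization step: relate the probability that some consistent set has measure $>\epsilon$ on the true distribution to twice the probability that such a set has an empirical frequency $>\epsilon/2$ on a ghost sample of size $N$, then reduce to a finite union bound over the at-most $\Pi_{\mathcal H^c}(2N)$ distinct behaviors on the combined $2N$ points, each controlled by a tail bound on a hypergeometric/binomial random variable.

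The main obstacle I expect is not the learning-theoretic core — that is textbook once the reduction is set up — but the measurability bookkeeping: $\sup_{x\in\text{dom }\hat F_N}V(x)$ is a supremum over an uncountable, sample-dependent index set, so one must argue it is a genuine random variable (or replace it by a measurable majorant), and similarly the event $B$ and the $\epsilon$-net event must be shown measurable under $\mathbb P^N$. This is exactly the ``technical arguments on measurability'' the introduction defers to the Appendix, and I would handle it there — typically via the completeness of the probability space, separability of $\mathcal X$, and the lower semicontinuity of $f(\xi,\cdot)$, which let one replace the uncountable supremum by one over a countable dense subset up to null sets.
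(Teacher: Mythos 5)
Your proposal is correct and takes essentially the same route as the paper: the paper makes the identical reduction (points of $\text{dom } \hat F_N$ correspond to hypotheses $h_x=\mathbf{1}_{H_x}$ consistent with the target $\mathbf{1}_{\Xi}$, and $V(x)$ is the generalization error), then invokes Theorem 8.4.1 of \cite{anthony1997computational}, which is exactly the Blumer--Ehrenfeucht--Haussler--Warmuth consistent-learner/$\epsilon$-net bound with the constant $\frac{4}{\epsilon}\big(d\log\frac{12}{\epsilon}+\log\frac{2}{\delta}\big)$ that you propose to rederive via symmetrization and Sauer--Shelah. Your final step, $V(x^\star(\xi^{[N]}))\le\sup_{x\in\text{dom } \hat F_N}V(x)$ almost surely, and the deferral of the measurability bookkeeping to the Appendix, coincide with the paper as well.
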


\begin{proof}
	We first adopt the following notations from \cite{anthony1997computational}. For $x\in\mathcal X$, define $ {h}_x:\Xi\rightarrow \{0,1\}$ as
\begin{equation*}
   h_x(\xi)\triangleq\mathbf{1}_{H_x}(\xi) = \begin{cases}
1 \text {\ \ when } f(\xi,x)<+\infty\\
0 \text {\ \ when } f(\xi,x)=+\infty
\end{cases}
\end{equation*}
and denote 
\begin{equation*}
   \text{H}\triangleq\{h_x\}_{x\in\mathcal X}\cup \{\mathbf{1}_{\Xi}(\cdot)\},
\end{equation*}
to be a hypothesis space as in the PAC learning context. Now, given a ``target concept" $t\in\text{H}$, the error of a hypothesis $h\in\text{H}$ is defined in \cite{anthony1997computational} as
\begin{equation}\label{a1}
    \text{er}_{\mathbb P}(h)\triangleq \mathbb P(\{\xi\in\Xi: h(\xi)\neq t(\xi)\}).
\end{equation}
Building on \eqref{a1},  \cite{anthony1997computational} further defines
\begin{align}\label{a2}
    \text{H}[\xi^{[N]}]=&\{h\in\text{H} \text{ } |\text{ } h(\xi_i)=t(\xi_i), \text{ } \forall i \in [N]\} \nonumber\\
    B_\epsilon=& \{h\in\text{H} \text{ }| \text{ } \text{er}_\mathbb P (h) \geq \epsilon\}
\end{align}
It is straightforward to see that $\text{H}$ has the same VC dimension as $\mathcal{H}$ ($d_{VC}(\mathcal H)=d$). Thus, given $\text{H}$ with VC dimension $d<+\infty$ and let the target function be
\begin{equation}\label{target}
    t(\cdot)=\mathbf{1}_{\Xi}(\cdot)\in\text{H},
\end{equation}
Theorem 8.4.1 in \cite{anthony1997computational} states that if
\begin{equation*}
    N \geq \frac{4}{\epsilon}\bigg(d\log\big(\frac{12}{\epsilon}\big)+\log\big(\frac{2}{\delta}\big)\bigg),
\end{equation*}
we have
\begin{equation}\label{a3}
    \mathbb P^{N}(\{\xi^{[N]}\text{ }| \text{ } \text{H}[\xi^{[N]}] \cap B_\epsilon \neq \emptyset\})<\delta.
\end{equation}
Given samples $\xi^{[N]}$, we know from \eqref{a2} and \eqref{target} that the feasible region of SAA is 
\begin{align}\label{t1}
\text{dom } \hat F_N=&\cap_{i=1}^N \{x\in\mathcal X \text{ }| \text{ } f(\xi_i,x)<+\infty\}\nonumber\\
=&\{x\in\mathcal X \text{ }| \text{ } h_x(\xi_i)=1,\text{ } \forall i \in [N] \}\nonumber\\
=&\{x\in\mathcal X \text{ }| \text{ } h_x \in \text{H}[\xi^{[N]}] \}
\end{align}
and we also know from \eqref{a1} and \eqref{a2} that,
\begin{align}\label{t2}
V(x)\geq \epsilon \iff \text{er}_{\mathbb P} (h_x)\geq \epsilon \iff h_x \in B_\epsilon
\end{align}
for any $x\in\mathcal X$. It then follows from \eqref{t1} and \eqref{t2} that, given samples $\xi^{[N]}$,
\begin{equation}\label{t3}
    \{ x\in\text{dom } \hat F_N \text{ }| \text{ } V(x) \geq \epsilon\} \neq \emptyset \implies \text{H}[\xi^{[N]}] \cap B_\epsilon \neq \emptyset .
\end{equation}
Thus, if  $x^\star(\xi^{[N]})\in\text{dom } \hat F_N$ with probability 1, it follows from \eqref{a3} and \eqref{t3} that
\begin{align*}
  \mathbb P^{N}\bigg(\text{H}[\xi^{[N]}] \cap B_\epsilon = \emptyset\bigg)> 1-\delta \implies& \mathbb P^{N} \bigg(\{ x\in\text{dom } \hat F_N \text{ }| \text{ } V(x) \geq \epsilon\} = \emptyset \bigg)  > 1-\delta\nonumber\\
   \implies & \mathbb P^N \bigg(\sup_{x\in\text{dom } \hat F_N} V(x) \leq \epsilon\bigg) > 1-\delta,
   \nonumber\\
   \implies& \mathbb P^N\bigg(V(x^\star(\xi^{[N]}))\leq\epsilon\bigg)>1-\delta
\end{align*}
which concludes the proof.
\end{proof}
\begin{remark}\label{t1r}
First, in the proof of \cref{main}, the sample complexity in \eqref{sc} comes from Theorem 8.4.1 of \cite{anthony1997computational} and provides a $O(\frac{d}{\epsilon}\log(\frac{1}{\epsilon})+\frac{1}{\epsilon}\log(\frac{1}{\delta}))$ bound. It is worth noting that a better sample complexity of $O(\frac{d}{\epsilon}+\frac{1}{\epsilon}\log(\frac{1}{\delta}))$ can be achieved by recent breakthroughs of \cite{vcnew,vcnew0}. We choose to present the result from Theorem 8.4.1 in \cite{anthony1997computational} because it is more concise and explicit. Nevertheless, under our framework, a better bound is indeed obtainable. Second, as shown in the proof, the feasibility result of the theorem holds not just for the optimal solution of SAA, but also for any generic point within the feasible region of SAA. In other words, \cref{main} holds for any algorithm that can output a solution $x^\star(\xi^{[N]})$ (not necessarily the optimal solution of the SAA) in the feasible region of SAA with probability 1. This observation is particularly important when the considered SAA problem is non-convex (i.e., mixed-integer programming), solvable up to local optimum, or requires approximations algorithms.
\end{remark}

There are several advantages when applying \cref{main} to bound the feasibility of SAA solutions: 1) Other than a finite VC dimension, it does not rely on specific assumptions regarding the structures of \eqref{sp} and \eqref{saa}. As we shall see later, even when the chain-constrained domain condition in \cite{liu2019feasibility} becomes restrictive, our analysis based on VC dimension would remain effective.   2) Our bound is explicit and computable with no hidden constants. 3) One might argue the generality of \cref{main} would come at a cost of higher sample complexity. However, as we shall see, this is not the case even when we compare our bounds with some of the best-known ones. Moreover, thanks to 1) above, \cref{main} yields some previously unattainable results. 

% allows us to derive bounds without specific structural assumptions other than finite VC dimensions
While \cref{main} is a result on sample complexity, it is convenient to convert it into an asymptotic rate of convergence with $N$. The infeasibility of SAA solution still decreases exponentially as in \cite{liu2019feasibility}, except the rate is now explicit. We summarize it into a corollary. 
\begin{corollary}
	Under the same conditions of \cref{main}, 
	\begin{equation*}
		\mathbb P^N (V(x^\star(\xi^{[N]}))>\epsilon) \leq 2 \exp \left(-\frac{N\epsilon}{4}\right) \left(\frac{12}{\epsilon}\right)^d.
	\end{equation*}
\end{corollary}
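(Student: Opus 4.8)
The plan is to simply \emph{invert} the sample-complexity statement of \cref{main}. Fix $\epsilon\in(0,1)$ and an arbitrary sample size $N$. The hypothesis \eqref{sc} that triggers the conclusion $\mathbb P^N(V(x^\star(\xi^{[N]}))>\epsilon)\le\delta$ reads $N\ge\frac{4}{\epsilon}\bigl(d\log(12/\epsilon)+\log(2/\delta)\bigr)$; since the right-hand side is strictly decreasing in $\delta$, a routine rearrangement shows that, for the given $N$ and $\epsilon$, this inequality holds \emph{precisely} when
\begin{equation*}
\delta \;\ge\; 2\exp\!\left(-\frac{N\epsilon}{4}\right)\left(\frac{12}{\epsilon}\right)^{d} \;=:\; \delta^\star .
\end{equation*}
So I would take $\delta=\delta^\star$ and feed it back into \cref{main}.

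Next I would split into two cases according to the size of $\delta^\star$. If $\delta^\star\in(0,1)$, then \cref{main} applies verbatim with this choice of $\delta$, because \eqref{sc} is satisfied (with equality), and it yields $\mathbb P^N(V(x^\star(\xi^{[N]}))>\epsilon)\le\delta^\star$ via \eqref{mea2}, which is exactly the claimed bound. If instead $\delta^\star\ge 1$, the asserted inequality is trivially true, since its left-hand side is a probability and is therefore bounded above by $1\le\delta^\star$. Combining the two cases gives the corollary for every $N$ and every $\epsilon\in(0,1)$. Note also that $\delta^\star>0$ always, so the standing requirement $0<\delta$ in \cref{main} never causes difficulty.

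I do not expect a genuine obstacle here: no new concentration inequality is needed, and the exponential decay rate $e^{-N\epsilon/4}$ is inherited directly from the $\log(2/\delta)$ term in \eqref{sc}. The only point requiring a small amount of care is that \cref{main} is phrased as a conditional implication under the restriction $0<\delta<1$, so one cannot literally substitute $\delta^\star\ge 1$; the trivial-probability argument above patches exactly that edge case. Everything else is the one-line algebraic inversion displayed above.
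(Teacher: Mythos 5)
Your proposal is correct and is exactly the route the paper takes: the corollary is obtained by inverting the sample-complexity condition \eqref{sc} of \cref{main}, i.e., solving $N\ge\frac{4}{\epsilon}\bigl(d\log(12/\epsilon)+\log(2/\delta)\bigr)$ for $\delta$ to get $\delta^\star=2e^{-N\epsilon/4}(12/\epsilon)^d$ and applying \eqref{mea2}, with the case $\delta^\star\ge 1$ being trivial. Your handling of that edge case (where \cref{main}'s restriction $0<\delta<1$ would otherwise block a literal substitution) is a sensible bit of added care, but the argument is otherwise the same.
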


To compare with the existing results in \cite{liu2019feasibility}, we note that direct comparisons on sample complexity are possible only when the rate of convergence is explicit, which only applies to Scenario 1 in \cite{liu2019feasibility} (summarized in Section 2). Moreover, it is shown in \cite{care2014fast} that a relatively tight sufficient condition for $N$ to satisfy
$$\sum_{k=0}^{m-1} {N \choose k} \epsilon^i (1-\epsilon)^{N-i} \leq \delta,$$
is
\begin{equation}\label{scomple}
	N \geq \frac{e}{e-1}\frac{1}{\epsilon}\bigg(m-1+\log\big(\frac{1}{\delta}\big)\bigg),
\end{equation}
which provides a tight bound on sample complexity and we shall make use of it later. 

\section{Applications}
In this section, we apply \cref{main} in several problems of practical interests and compare with established results. For the examples formulated in this section, one can check that $f(\xi,x)$ satisfies the general conditions imposed in the introduction section (below \eqref{sp}). Throughout the proofs, we make use the following lemmas:
\begin{lemma}[Theorem 1.1 from \cite{van2009note}]\label{old}
Given classes of subsets $\mathcal C_1$, $C_2$, ..., $\mathcal C_m$ with $d_{VC}(\mathcal C_j)=d_j<+\infty$, define
    \begin{align*}
\sqcap_{j=1}^m \mathcal C_j\triangleq&\{\cap_{j=1}^m C_j: C_j\in\mathcal C_j, j=1,...,m\}\nonumber\\
\sqcup_{j=1}^m \mathcal C_j\triangleq&\{\cup_{j=1}^m C_j: C_j\in\mathcal C_j, j=1,...,m\},
    \end{align*}
and $d=\sum_{j=1}^m d_j$. Then,
\begin{equation*}
    \max (d_{VC}(\sqcap_{j=1}^m \mathcal C_j), d_{VC}(\sqcup_{j=1}^m \mathcal C_j)) \leq \frac{e}{(e-1)\log2} d \log (\frac{e}{\log 2}m).
\end{equation*}
\end{lemma}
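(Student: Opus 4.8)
The plan is standard for this kind of VC bound: control the \emph{growth function} (shatter coefficient) of the intersection and union classes, invoke the Sauer--Shelah lemma, and then solve the resulting transcendental inequality for the largest shatterable cardinality. For a class of subsets $\mathcal C$ write $\Pi_{\mathcal C}(n)\triangleq\max\{|\{C\cap S:C\in\mathcal C\}|:|S|=n\}$ for its growth function; the Sauer--Shelah lemma gives $\Pi_{\mathcal C_j}(n)\le\sum_{i=0}^{d_j}\binom{n}{i}\le(en/d_j)^{d_j}$ whenever $n\ge d_j\ge1$. We may assume every $d_j\ge1$ (a class of VC dimension $0$ is a single set and contributes only a trivial factor below) and that $n\ge d$ (if $n<d$ the inequality to be proved already holds, since its right side exceeds $d$). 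The key elementary observation is that for any finite set $S$ the trace $(\cap_{j=1}^m C_j)\cap S=\cap_{j=1}^m(C_j\cap S)$ is determined by the tuple $(C_1\cap S,\dots,C_m\cap S)$, and likewise for $\cup_{j=1}^m C_j$; hence
\[
\max\bigl(\Pi_{\sqcap_{j=1}^m\mathcal C_j}(n),\ \Pi_{\sqcup_{j=1}^m\mathcal C_j}(n)\bigr)\ \le\ \prod_{j=1}^m\Pi_{\mathcal C_j}(n)\ \le\ \prod_{j=1}^m\Bigl(\frac{en}{d_j}\Bigr)^{d_j}.
\]

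Next I would simplify the product using concavity of $\log$: with the weights $d_j/d$ summing to one, Jensen's inequality gives $\sum_{j=1}^m d_j\log(en/d_j)=d\sum_{j=1}^m\tfrac{d_j}{d}\log(en/d_j)\le d\log\bigl(\sum_{j=1}^m\tfrac{d_j}{d}\cdot\tfrac{en}{d_j}\bigr)=d\log(enm/d)$, so both growth functions are bounded by $(enm/d)^d$. Therefore, if $\sqcap_{j=1}^m\mathcal C_j$ or $\sqcup_{j=1}^m\mathcal C_j$ shatters a set of $n$ points, then $2^n\le(enm/d)^d$, and taking natural logarithms yields
\[
n\log2\ \le\ d\,\log\!\Bigl(\frac{enm}{d}\Bigr).
\]

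The remaining and most delicate step is to extract from this an explicit upper bound on $n$ with exactly the constants in the statement. Substituting $u=enm/d$ (so $n=ud/(em)$) reduces the inequality to $u\le\frac{em}{\log2}\log u$. I would then apply the elementary bound $\log z\le z/e$ (valid for all $z>0$, equality at $z=e$): writing $\log u=\log s+\log(u/s)\le\log s+u/(es)$ and choosing $s=em/\log2$ so that the term linear in $u$ carries the coefficient $1/e$, one obtains $u(1-1/e)\le\frac{em}{\log2}\log\bigl(\frac{e}{\log2}m\bigr)$, i.e.\ $u\le\frac{e}{e-1}\cdot\frac{e}{\log2}m\,\log\bigl(\frac{e}{\log2}m\bigr)$; translating back via $n=ud/(em)$ gives $n\le\frac{e}{(e-1)\log2}\,d\,\log\bigl(\frac{e}{\log2}m\bigr)$. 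Since every set shattered by either $\sqcap_{j=1}^m\mathcal C_j$ or $\sqcup_{j=1}^m\mathcal C_j$ satisfies this, the claimed bound on $\max(d_{VC}(\sqcap_{j=1}^m\mathcal C_j),d_{VC}(\sqcup_{j=1}^m\mathcal C_j))$ follows. For the union one could alternatively pass to the complement classes $\{\overline C:C\in\mathcal C_j\}$, which have the same VC dimensions and satisfy $\overline{\cup_jC_j}=\cap_j\overline{C_j}$, reducing to the intersection case; but the trace-counting bound above handles both simultaneously. I expect the main obstacle to be precisely this transcendental step --- landing the constants on $\frac{e}{(e-1)\log2}$ and $\frac{e}{\log2}$ rather than something loose --- along with the minor bookkeeping around the Sauer--Shelah regime ($n\ge d_j$, $d_j=0$).
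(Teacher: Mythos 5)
Your proof is correct. Note that the paper itself does not prove this lemma at all: it is imported verbatim as Theorem 1.1 of the cited van der Vaart--Wellner note, so there is no internal proof to compare against; your argument is essentially a reconstruction of the standard proof in that source. The chain growth-function-of-composite $\le$ product of growth functions, Sauer--Shelah in the form $\bigl(en/d_j\bigr)^{d_j}$, Jensen with weights $d_j/d$ to get $(enm/d)^d$, and then the inversion of $n\log 2\le d\log(enm/d)$ via $\log z\le z/e$ with the choice $s=em/\log 2$ all check out, and the constants $\tfrac{e}{(e-1)\log 2}$ and $\tfrac{e}{\log 2}$ land exactly as stated. Your bookkeeping at the edges (reducing to $n\ge d$ because the right-hand side exceeds $d$, and discarding classes with $d_j=0$, which are singletons contributing a trivial factor) is also sound, so there is no gap.
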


We also use a key result on the upper bound of VC dimension for sets determined by finite-dimensional function spaces. Recall that $\mathcal G$ is a vector space of functions if $a_1f_1+a_2f_2\in\mathcal G$ for any $a_1,a_2\in\mathbb R$ and $f_1,f_2\in\mathcal G$. The lemma below comes directly from \cite{dudley1978central}. For a more concise proof, one is also referred to Lemma 2.6.15 in \cite{van1996weak}.

\begin{lemma}\label{dud}
Given arbitrary space $\mathcal S$, let $\mathcal G$ be a finite-dimensional vector space of functions $g(\cdot): \mathcal S\rightarrow \mathbb R$ with $\text{dim }\mathcal G <+\infty$. Then, the class of sets
\begin{equation*}
    \mathcal H=\{\{s\in\mathcal S: g(s) \geq 0 \}\}_{g\in\mathcal G}
\end{equation*}
has VC dimension at most $\textnormal{dim } \mathcal G$.
\end{lemma}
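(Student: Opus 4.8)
The plan is to show that no collection of $k+1$ points can be shattered by $\mathcal{H}$, where $k = \textnormal{dim}\,\mathcal{G}$; this immediately gives $d_{VC}(\mathcal{H}) \leq k$. First I would fix arbitrary points $s_1,\dots,s_{k+1}\in\mathcal{S}$ and introduce the linear evaluation map $L:\mathcal{G}\to\mathbb{R}^{k+1}$ given by $L(g) = (g(s_1),\dots,g(s_{k+1}))$. Since $\textnormal{dim}\,\mathcal{G} = k < k+1$, the range of $L$ is a proper linear subspace of $\mathbb{R}^{k+1}$, so there exists a nonzero vector $\gamma = (\gamma_1,\dots,\gamma_{k+1})$ orthogonal to it, i.e. $\sum_{i=1}^{k+1}\gamma_i\, g(s_i) = 0$ for every $g\in\mathcal{G}$.

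Next, because $\gamma\neq 0$, I may replace $\gamma$ by $-\gamma$ if necessary so that at least one component is strictly negative; write $I_- = \{i : \gamma_i < 0\}\neq\emptyset$. Now suppose for contradiction that $\{s_1,\dots,s_{k+1}\}$ is shattered. Then there is some $g\in\mathcal{G}$ whose associated set $\{s\in\mathcal{S}: g(s)\geq 0\}$ picks out exactly the subset $\{s_i : i\notin I_-\}$; that is, $g(s_i)\geq 0$ whenever $\gamma_i\geq 0$ and $g(s_i)<0$ whenever $\gamma_i<0$. Substituting this $g$ into the orthogonality identity, every term with $\gamma_i > 0$ contributes $\gamma_i g(s_i)\geq 0$, every term with $\gamma_i = 0$ contributes $0$, and every term with $i\in I_-$ is a product of two negative numbers, hence strictly positive. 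Since $I_-$ is nonempty, the total is strictly positive, contradicting that it equals $0$. Thus no set of $k+1$ points is shattered, and since the points were arbitrary we conclude $d_{VC}(\mathcal{H})\leq k = \textnormal{dim}\,\mathcal{G}$.

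The only delicate point is the bookkeeping forced by the set membership being defined through the non-strict inequality $g(s)\geq 0$ rather than $g(s)>0$: one must choose the target subset carefully so that the sign pattern of $g$ aligns with that of $\gamma$ and the linear functional is driven strictly positive. Choosing the target subset to be exactly the complement of $I_-$ (equivalently, the indices where $\gamma_i\geq 0$) handles this, and the normalization guaranteeing $I_-\neq\emptyset$ is precisely what makes the resulting contradiction strict. Everything else is routine linear algebra, so I expect no serious obstacle beyond this sign bookkeeping.
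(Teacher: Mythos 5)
Your proof is correct: the evaluation-map/orthogonal-vector argument with the subset chosen as the complement of $I_-$ (so that the non-strict inequality $g(s)\geq 0$ still forces a strictly positive sum) is exactly the classical proof of this fact. The paper itself gives no proof of \cref{dud}, deferring to Dudley (1978) and Lemma 2.6.15 of van der Vaart and Wellner, and your argument is essentially the standard one found there, so nothing further is needed.
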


Note that since $\mathcal G$ is a vector space, $\mathcal H$ can also be characterized as $\{\{s\in\mathcal S: g(s) \leq 0 \}\}_{g\in\mathcal G}$. The following example will be used later.
\begin{example}\label{exdud}
According to \cref{dud}, $\mathcal U=\{\{(y,z)\in\mathbb R^d\times\mathbb R: y^T x \leq z \}\}_{x\in\mathbb R^d}$ has VC dimension at most $d$. In fact, the VC dimension for $\mathcal U$ is equal to $d$ (see  \cite{de2004constraint} or \cite{dudley1978central}).
\end{example}

Finally, related to the reparametrization discussion in \cref{example}, we have the following result.

\begin{lemma}\label{space}
Suppose $\mathcal H=\{H_x\}_{x\in\mathcal X}$ is a collection of subsets on $\Xi$. Given $f(\cdot): \Xi'\rightarrow\Xi$ and $H\subseteq\Xi$, define
\begin{equation*}
    f^{-1}(H)\triangleq\{\xi'\in\Xi', f(\xi')\in H\}
\end{equation*}
with the convention $f^{-1}(\emptyset)=\emptyset$ and define
\begin{equation*}
    \mathcal H'\triangleq \{f^{-1}(H_x)\}_{x\in\mathcal X}.
\end{equation*}
Then, we have $d_{VC}(\mathcal H') \leq d_{VC}(\mathcal H) $.
\end{lemma}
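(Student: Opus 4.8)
The plan is to show that shattering a set of points under $\mathcal H'$ forces shattering an equal-sized set under $\mathcal H$, so the maximal cardinality of a shattered set cannot increase under the pullback. Concretely, suppose $\{\xi'_1,\dots,\xi'_d\}\subseteq\Xi'$ is shattered by $\mathcal H'$. First I would observe that the points $f(\xi'_1),\dots,f(\xi'_d)$ must be \emph{distinct}: if $f(\xi'_i)=f(\xi'_j)$ for $i\neq j$, then for every $x$ we have $\xi'_i\in f^{-1}(H_x)\iff f(\xi'_i)\in H_x\iff f(\xi'_j)\in H_x\iff \xi'_j\in f^{-1}(H_x)$, so no set in $\mathcal H'$ could pick out $\xi'_i$ without also picking out $\xi'_j$, contradicting shattering (assuming $d\geq 2$; the cases $d\le 1$ are trivial since $d_{VC}\ge 0$ always and $d_{VC}(\mathcal H')\ge 1$ already forces $\mathcal H$ to contain at least two distinct traces).

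Having established this, I would set $y_i\triangleq f(\xi'_i)\in\Xi$ and claim $\{y_1,\dots,y_d\}$ is shattered by $\mathcal H$. Fix any $D\subseteq\{y_1,\dots,y_d\}$; let $D'\triangleq\{\xi'_i : y_i\in D\}$, which is a well-defined subset of $\{\xi'_1,\dots,\xi'_d\}$ precisely because the $y_i$ are distinct. Since $\mathcal H'$ shatters $\{\xi'_1,\dots,\xi'_d\}$, there is some $x\in\mathcal X$ with $D'\subseteq f^{-1}(H_x)$ and $(\{\xi'_1,\dots,\xi'_d\}\setminus D')\cap f^{-1}(H_x)=\emptyset$. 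Unwinding the definition of $f^{-1}$, this says exactly that $y_i\in H_x$ for $\xi'_i\in D'$ and $y_i\notin H_x$ for $\xi'_i\notin D'$, i.e. $D\subseteq H_x$ and $(\{y_1,\dots,y_d\}\setminus D)\cap H_x=\emptyset$. Hence $H_x$ picks out $D$, and since $D$ was arbitrary, $\{y_1,\dots,y_d\}$ is shattered by $\mathcal H$, giving $d_{VC}(\mathcal H)\ge d$. Taking the supremum over all shattered sets $\{\xi'_1,\dots,\xi'_d\}$ yields $d_{VC}(\mathcal H')\le d_{VC}(\mathcal H)$.

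The only subtlety — and the step I would be most careful about — is the injectivity argument and the degenerate small-$d$ cases: if some $H_x=\emptyset$ or if $\Xi'$ is much larger than $\Xi$, one must be sure the map from the shattered set in $\Xi'$ to its image in $\Xi$ is genuinely one-to-one before transporting subsets across it. The convention $f^{-1}(\emptyset)=\emptyset$ is harmless here and does not affect the argument. Everything else is bookkeeping: the correspondence $D\leftrightarrow D'$ is a bijection between subsets of the two shattered sets once distinctness is known, and the equivalence $\xi'\in f^{-1}(H)\iff f(\xi')\in H$ is immediate from the definition. No measurability issues arise since VC dimension is a purely combinatorial quantity.
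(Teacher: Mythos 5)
Your proof is correct and follows essentially the same route as the paper's: push the shattered set $\{\xi'_i\}$ forward to $\{f(\xi'_i)\}\subseteq\Xi$ and observe that each subset picked out via $f^{-1}(H_x)$ transfers to the image via the equivalence $\xi'\in f^{-1}(H_x)\iff f(\xi')\in H_x$. Your explicit check that the image points are distinct is a nice touch the paper leaves implicit (there it follows automatically from the picking-out property applied to singletons), but it does not change the argument.
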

\begin{proof}
    For any set of points $\{\xi'_i\}_{i\in[n]}\subseteq\Xi'$ shattered by $\mathcal H'$, let $\xi_i=f(\xi'_i)$ and consider $\{\xi_i\}_{i\in[n]}$. If we can show $\{\xi_i\}_{i\in[n]}$ is shattered by $\mathcal H$, then we must have $n\leq d_{VC}(\mathcal H)$. Since $n$ is arbitrary, we have:
    \begin{equation*}
        \sup \{n \text{ }| \text{ } H'\subseteq \Xi', |H'|=n \text{ and } H' \text{ is shattered by } \mathcal H' \} \leq d_{VC}(\mathcal H)
    \end{equation*}
  and consequently $d_{VC}(\mathcal H') \leq d_{VC}(\mathcal H)$.
  
  To show $\{\xi\}_{i\in[n]}$ is shattered by $\mathcal H$, pick any $I\subseteq[n]$, and we want to show there exists $x\in\mathcal X$ such that $\{\xi_i\}_{i\in I}\subseteq H_x$ and $\{\xi_i\}_{i\in[n]\cap I^{c}} \cap H_x=\emptyset$. Now, recall $\{\xi'\}_{i\in[n]}$ is shattered by $\mathcal H'$, so there exists $x\in\mathcal X$ such that $\{\xi'_i\}_{i\in I}\subseteq f^{-1}(H_x)$ and $\{\xi'_i\}_{i\in[n]\cap I^{c}} \cap f^{-1}(H_x)=\emptyset$. This concludes the proof, since for such $x\in\mathcal X$, 
  we have 
  \begin{equation*}
      \begin{cases}
		\xi'_i \in f^{-1}(H_x) \implies \xi_i=f(\xi_i')\in H_x , & \text{for } i\in I \\
		\xi'_i \notin f^{-1}(H_x) \implies \xi_i=f(\xi_i')\notin H_x , & \text{for } i\in [n] \cap I^c.
	\end{cases}
  \end{equation*}
\end{proof}

\subsection{Two-Stage Stochastic Programming}

One of the main motivating examples in studying SAA feasibility, mentioned in both \cite{chen2019sample,liu2020asymptotic, liu2019feasibility}, is the two-stage stochastic program without relatively complete recourse (\cite{chen2019sample,liu2020asymptotic, liu2019feasibility}  focus on the linear recourse). This section is divided into three subsections to discuss feasibility in two-stage stochastic programming. The first subsection considers the general case with only continuous variables, the second subsection considers mixed-integer variables, and the third subsection considers low-dimensional structures. Finally, we note that, while we focus on two-stage stochastic programming, these feasibility results can be generalized to multistage stochastic programs using stagewise independence of the data process. For a detailed discussion, see Section 4 of \cite{liu2019feasibility}.

\subsubsection{General Case}
We first consider the following general case. For clarity and generality, we rewrite \eqref{sp} to define $f(\xi,x)$ as follows:
\begin{equation}\label{4-1}
    \inf_{x\in\mathcal X} F(x) \triangleq f_0(x)+\mathbb E [f(\xi,x)],
\end{equation}
with $f_0(x)$ being a deterministic function and
\begin{align}\label{ex1}
\begin{aligned}
		f(\xi,x) \triangleq & \inf_{y}  g(\xi,y) \\
		 \textrm{s.t.} \quad &  W_\xi y+T_\xi x=h_\xi,\\
		& y\geq0,    \\
	\end{aligned}
\end{align}
where we only assume $g(\xi,\cdot)$ is convex and finite everywhere, almost surely $\forall \xi\in\Xi$. Typical classes of problems that fall under this formulation include (see \cite{liu2020asymptotic}):
\begin{itemize}

    \item two-stage stochastic linear programming (SLP): $f_0(x)=c^T x$ is linear, $\mathcal X=\{x\in\mathbb R^n: Ax\leq b\}$ is polyhedral, and $g(\xi,y)=q(\xi)^T y$ is linear in $y$.
    \\
    \item two-stage stochastic quadratic-linear programming (SQLP): 
    
    $f_0(x)=\frac{1}{2}x^TQx+c^T x$ is quadratic in $x$, $\mathcal X=\{x\in\mathbb R^n: Ax\leq b\}$ is polyhedral, and $g(\xi,y)=q(\xi)^T y$ is linear in $y$.
    \\
    \item two-stage stochastic quadratic-quadratic programming (SQQP): 
    
    $f_0(x)= \frac{1}{2}x^TQx+c^Tx $ is quadratic in $x$, $\mathcal X=\{x\in\mathbb R^n: Ax\leq b\}$ is polyhedral, and $g(\xi,y)=\frac{1}{2}y^TP(\xi)y+q(\xi)^T y$ is quadratic in $y$.
\end{itemize}

\text{}

Suppose $g(\xi,y)$ takes the general form above. To derive feasibility results on the SAA solution, \cite{liu2019feasibility} assumes there are only finitely many distinct values for $W_\xi$ or $T_\xi$, i.e., $|\{W_\xi\}|=p$ and $|\{T_\xi\}|=q$ where $\{p,q\}\subseteq \mathbb Z^+$. By Farkas' lemma, $\{y\geq 0: W_\xi y+T_\xi x=h_\xi\} $ is non-empty (i.e., $f(\xi,x)<+\infty$) if and only if $a^T(h_\xi-T_\xi x)\geq 0$ for all $a$ such that $a^T W_\xi \geq 0$. Consequently, as shown in \cite{liu2019feasibility}, suppose we let $W_i, i\in[p]$ to be the $i$-th distinct element in $\{W_\xi\}$, let $\{a_{ij}\}_{j \in J_i}$ to be the set of non-equivalent extreme rays of polyhedral cone $\mathcal C_i=\{a: a^T W_i \geq 0\}$ with $J_i$ being the index set for these extreme rays of $\mathcal C_i$, and let $I(\cdot):\Xi\rightarrow[p]$ be the indexing function such that $I(\xi)=i$ when $W_\xi=W_i$. Then, we have
\begin{equation}\label{dom}
	\text {dom } f_\xi=\{x\in\mathcal X \text{ }| \text{ } a_{I(\xi)j}^T T_\xi x \leq a_{I(\xi)j}^T h_\xi, \forall j \in J_{I(\xi)}\},
\end{equation}
where we assume the cone $\mathcal C_i$ are pointed, so they can be minimally generated by a unique set of non-equivalent extreme rays. Otherwise, a general polyhedral cone is still finitely generated according to Weyl-Minkowski’s Theorem, but the minimal set of generators may not be unique. Without of loss of generality (as in most linear programming problems), we assume the rows of $W_i$ are linearly independent, which implies the cone $\mathcal C_i$ is pointed (see, e.g., \cite{terzer2009large}). Now, note that \eqref{dom} allows \cite{liu2019feasibility} to use the so-called ``chain-constrained domain" structure. Here, a chain-constrained domain is defined as follows:
\begin{definition}\label{chain}
	A collection of functions $\{f(\xi,\cdot)\}_{\xi\in\Xi}$ has chain-constrained domain of order $m$ if there exist $m$ chains $\{U_k^\xi\}_{\xi\in\Xi}$ and 
	\begin{equation*}
		\text{ dom } f_\xi = \bigcap_{k=1}^m U_k^\xi
	\end{equation*} 
	where a collection of sets $\{U^\omega\}_{\omega \in I}$ is a chain if for any $\omega_1, \omega_2 \in I$, we have either $U_{\omega_1} \subseteq U_{\omega_2} $ or $U_{\omega_2} \subseteq U_{\omega_1} $.
\end{definition}

It is shown in \cite{liu2019feasibility} that $\text{dom } f_\xi$ in \eqref{dom} is a chain-constrained domain of order $m=q\sum_{i=1}^p |J_i|$. Consequently, Scenario 1 in \cite{liu2019feasibility} can be applied to show that
\begin{equation*}
	\mathbb P^N (V(x^\star (\xi^{[N]})) > \epsilon) \leq \sum_{k=0}^{m-1}  {N \choose k} \epsilon^k(1-\epsilon)^{N-k},
\end{equation*}
which has a sample complexity 
\begin{equation}\label{s1}
	\frac{e}{e-1}\frac{1}{\epsilon}\big(m-1+\log\big(\frac{1}{\delta}\big)\big)
\end{equation}
for achieving $\mathbb P^N (V(x^\star (\xi^{[N]})) > \epsilon) \leq \delta$ according to \eqref{scomple}.

Notice a necessary assumption made in \cite{liu2019feasibility} is that only finitely many distinct values for $W_\xi$ or $T_\xi$ are allowed. However, using \cref{main}, we can get a different sample complexity and concentration bounds, even when the cardinalities of $|\{W_\xi\}|$ and $|\{T_\xi\}|$ are infinite. In particular, we first show feasibility results on \eqref{ex1} when $|\{W_\xi\}|$ and $|\{T_\xi\}|$ are finite in \cref{2-stage}. Then, we extend the result to the case when $|\{W_\xi\}|$ and $|\{T_\xi\}|$ are infinite in \cref{3-stage}.
\begin{corollary}\label{2-stage}
Consider \eqref{ex1} and assume there are only finitely many distinct values for $W_\xi$ or $T_\xi$ , i.e., $|\{W_\xi\}|=p$ and $|\{T_\xi\}|=q$ where $\{p,q\}\subseteq \mathbb Z^+$. Let $\xi^{[N]}=\{\xi_1,...,\xi_N\}$ be IID samples from $\mathbb P$ (consequently $\xi^{[N]}\sim \mathbb P^N$), and $x^\star (\xi^{[N]})$ be the output of any algorithm that is guaranteed to be within the feasible region of SAA. Then, if 
\begin{equation}\label{s2}
	 N\geq \frac{4}{\epsilon}\bigg(d_{VC}\log\big(\frac{12}{\epsilon}\big)+\log\big(\frac{2}{\delta}\big)\bigg),
\end{equation}
where $|J|=\max_{i\in[p]}|J_i|$ and 
\begin{equation}\label{dvc}
    d_{VC}=\Big(\frac{e}{(e-1)\log2} |J|(n+1) \log \big(\frac{e}{\log2}\cdot |J|\big)\Big),
\end{equation}
we have
	\begin{equation*}
\mathbb P^N(V(x^\star (\xi^{[N]}))>\epsilon) \leq  \delta.
	\end{equation*}
for any $0<\delta,\epsilon <1$. Equivalently, in terms of convergence rate, we have
\begin{equation}
    \mathbb P^N (V(x^\star(\xi^{[N]}))>\epsilon) \leq 2 \exp \left(-\frac{N\epsilon}{4}\right) \left(\frac{12}{\epsilon}\right)^{d_{VC}}.
\end{equation}
\end{corollary}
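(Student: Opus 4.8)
The plan is to derive \cref{2-stage} as a direct application of \cref{main}, so the entire task reduces to bounding the VC dimension of the class $\mathcal H = \{H_x\}_{x\in\mathcal X}\cup\{\Xi\}$ associated with the two-stage problem \eqref{ex1} and plugging that bound into the sample complexity \eqref{sc}. First I would use the Farkas characterization \eqref{dom} already established in the excerpt: for a fixed $\xi$, $f(\xi,x)<+\infty$ if and only if $a_{I(\xi)j}^T T_\xi x \le a_{I(\xi)j}^T h_\xi$ for all $j\in J_{I(\xi)}$. Rewriting this in terms of $\xi$ rather than $x$, the set $H_x=\{\xi: f(\xi,x)<+\infty\}$ is an intersection, over the (at most $|J|=\max_i|J_i|$ relevant) extreme-ray indices, of sets of the form $\{\xi: a^T T_\xi x \le a^T h_\xi\}$, where the pair $(T_\xi,h_\xi)$ plays the role of the ``data'' variable. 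Reparametrizing the randomness as $\xi' = (T_\xi^T a, a^T h_\xi)\in\mathbb R^n\times\mathbb R$ (legitimate by \cref{example} and, at the level of VC dimension, by \cref{space}), each such set becomes $\{(y,z): y^T x \le z\}$, which by \cref{exdud} (an instance of \cref{dud}) forms a class of VC dimension at most $n$. To be careful about the affine constant I would actually absorb it into an extra coordinate, giving the bound $n+1$ on each constituent class — matching the $(n+1)$ factor in \eqref{dvc}.

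Next I would assemble these pieces. Since finitely many distinct $W_\xi$ and $T_\xi$ occur, the index function $I(\cdot)$ takes finitely many values and the family of half-space-type sets indexed over all $(i,j)$ with $i\in[p]$, $j\in J_i$ is contained in finitely many copies of the VC-dimension-$(n+1)$ class identified above; crucially, for each fixed $x$ the set $H_x$ is an intersection of at most $|J|$ of these sets (the constraints active for whichever $W_i$ is realized), so $\mathcal H$ is contained in the ``intersection class'' $\sqcap_{j=1}^{|J|}\mathcal C_j$ where each $\mathcal C_j$ has VC dimension at most $n+1$. Applying \cref{old} with $m=|J|$ and $d=|J|(n+1)$ gives
\[
d_{VC}(\mathcal H) \le \frac{e}{(e-1)\log 2}\,|J|(n+1)\,\log\!\Big(\frac{e}{\log 2}\cdot|J|\Big),
\]
which is exactly $d_{VC}$ in \eqref{dvc}. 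I would also need to check that adjoining $\{\Xi\}$ to the class does not increase the VC dimension (it is the ``always true'' hypothesis and is harmless — this mirrors the choice $t(\cdot)=\mathbf 1_\Xi$ in the proof of \cref{main}), and that the SAA feasible region is non-empty with probability 1, which holds by the standing assumption below \eqref{sp} as already noted in the introduction.

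Finally, with $d_{VC}(\mathcal H)\le d_{VC}$ in hand, \cref{main} applied with $d = d_{VC}$ yields: whenever $N$ satisfies \eqref{s2}, $\mathbb P^N(V(x^\star(\xi^{[N]}))>\epsilon)\le\delta$ for any $0<\delta,\epsilon<1$, and the convergence-rate form follows by the same rearrangement as in the Corollary right after \cref{main} (using monotonicity of the bound in the VC dimension, so the bound with $d_{VC}$ in place of the exact VC dimension is still valid). The main obstacle — really the only non-bookkeeping point — is making the reduction to the half-space class fully rigorous: one must handle the index function $I(\xi)$ carefully so that across different realized $W_i$ the relevant constraint sets are genuinely drawn from a bounded number of VC classes, and one must be careful that the reparametrization $\xi\mapsto\xi'$ in \cref{space} is applied to the right object (the map goes from the original sample space to the reparametrized one, and \cref{space} only guarantees the VC dimension does not increase, which is the direction we need). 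Everything else is a direct substitution into \cref{old}, \cref{dud}/\cref{exdud}, and \cref{main}.
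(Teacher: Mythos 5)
Your proposal is correct and follows essentially the same route as the paper: Farkas' lemma to express $H_x$ via extreme-ray constraints, reparametrization of the data as $(a^TT_\xi, a^Th_\xi)$ so that \cref{dud}/\cref{exdud} with an extra coordinate gives VC dimension at most $n+1$ per constraint class (with the full space included by taking all parameters zero), \cref{space} to pull the bound back to $\Xi$, \cref{old} with $m=|J|$ for the intersection, and then \cref{main}. The only detail you defer — handling $I(\xi)$ uniformly across realizations — is resolved in the paper exactly as you anticipate, by zero-padding the constraint vectors up to $|J|$ so that every $H_x$ is an intersection of exactly $|J|$ sets, one from each of the $|J|$ classes.
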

\begin{proof}
Define $I(\cdot):\Xi\rightarrow[p]$ as the indexing function such that $I(\xi)=i$ when $W_\xi=W_i$. Then, it follows that $\mathcal H \triangleq \{H_x\}_{x\in\mathcal X}\cup\{\Xi\}$ defined in \eqref{domm} consists of $$H_x=\{\xi: a_{I(\xi) j}^T T_\xi x \leq a_{I(\xi) j}^T h_\xi, \forall j\in J_{I(\xi)}\}$$ where $\{a_{ij}\}_{j \in J_i}$ is the set of non-equivalent extreme rays of polyhedral cone $\{a: a^T W_i \geq 0\}$. Define $|J|=\max_{i\in[q]}|J_i|$ and, given $\xi\in\Xi$, define $\{(y_{\xi j},z_{\xi j})\}_{j\in |J|}\subseteq \mathbb R^{n}\times \mathbb R$ to be:

\begin{equation*}
	y^T_{\xi j}\triangleq
	\begin{cases}
		a_{I(\xi) j}^T T_\xi , & \text{for } 1 \leq j \leq |J_{I(\xi)}| \\
		\boldsymbol{0}, & \text{for } |J_{I(\xi)}| < j \leq |J| 
	\end{cases}
\end{equation*}

\begin{equation*}
	z_{\xi j}\triangleq
	\begin{cases}
		a_{I(\xi) j}^T h_\xi , & \text{for } 1 \leq j \leq |J_{I(\xi)}| \\
		{0}, & \text{for } |J_{I(\xi)}| < j \leq |J|.
	\end{cases}
\end{equation*}
Then, define $y_\xi^T=(y^T_{\xi 1}, y^T_{\xi 2},..., y^T_{\xi |J|})\in\mathbb R^{|J|n}$ and $z_\xi=(z_{\xi 1}, z_{\xi 2},..., z_{\xi |J|})\in\mathbb R^{|J|}$. Moreover, for $j\in[|J|]$, define $v_j(\cdot):\mathbb R^{n}\rightarrow\mathbb R^{|J|n}$ to be 
\begin{equation*}
	[v_{j}(x)]_i= 
	\begin{cases}
	[x]_i, & \text{ for  $(j-1)n+1\leq i \leq jn$ } \\
		{0}, & \text{otherwise}. 
	\end{cases}
\end{equation*}
Then, we can redefine 
\begin{equation}\label{use}
    H_x=\bigcap_{j=1}^{|J|} \{(y_\xi,z_\xi): y_{\xi}^T v_j(x) \leq [z_{\xi}]_j\}.
\end{equation}
% | x\in\mathbb R^n
Fix $j\in[|J|]$, let ${e}_j\in\mathbb R^{|J|}$ be the vector with 1 in the $j$-th component and 0 otherwise. Define a class of function $\mathcal G_j=\{g_{(x,c)}(\cdot)\}_{(x,c)\in\mathbb R^n\times \mathbb R}$ on $\mathbb R^{|J|(n+1)}$ such that, given $(y,z)\in\mathbb R^{|J|n}\times \mathbb R^{|J|}$,
\begin{equation*}
g_{(x,c)}((y,z))=[y,z]^T  \begin{bmatrix}
           -v_j(x) \\
           c\cdot e_j
\end{bmatrix}.
\end{equation*}
It is straightforward to check $\mathcal G_j$ is a finite-dimensional vector space of functions with $\text{dim } \mathcal G_j \leq n+1$.  Then, according to \cref{dud}, the VC dimension of $$\{\{(y,z)\in\mathbb R^{|J|n}\times \mathbb R^{|J|}: g_{x,c}((y,z))\geq 0\}\}_{(x,c)\in\mathbb R^n\times\mathbb R}$$
is at most $n+1$. Moreover, by letting $(x,c)=\boldsymbol{0}$, it can be seen that the above collection of sets includes the set $\mathbb R^{|J|n}\times \mathbb R^{|J|}$. Consequently, as a smaller collection of sets, the VC dimension of $$\{\{(y,z)\in\mathbb R^{|J|n}\times \mathbb R^{|J|}: g_{x,1}((y,z))\geq 0\}\}_{x\in\mathcal X} \cup \{\mathbb R^{|J|n}\times \mathbb R^{|J|}\} $$
is at most $n+1$. Thus, for each $j\in [|J|]$, it follows from \cref{space} that the VC dimension of $$\mathcal U_j=\{\{(y_\xi,z_\xi): y_{\xi}^T v_j(x) \leq [z_{\xi}]_j\}\}_{x\in\mathcal X}\cup \{\Xi\}$$ is at most $n+1$. Finally, it follows from \cref{old} and \eqref{use} that 
\begin{equation*}
	d_{VC}(\mathcal H) \leq d_{VC}(\sqcap_{j=1}^{|J|} \mathcal U_j) \leq \frac{e}{(e-1)\log2} |J|(n+1) \log \left(\frac{e}{\log2}\cdot |J|\right).
\end{equation*} 
The corresponding sample complexity and convergence rate follow from \cref{main}.
\end{proof}

Note that \cref{2-stage} does not require convexity assumption on $g_\xi$ or distributional assumptions on the random variables $W_\xi$ and $T_\xi$. In fact, we can further extend our result to the case when  $|\{W_\xi\}|$ and $|\{T_\xi\}|$ are infinite. In particular, the same proof can be applied as long as $$|J|=\max_{\xi\in\Xi} \{\text{ \# of extreme rays for the cone  }\{a: a^T W_\xi \geq 0\}\}$$ is finite. However, it is known that the number of non-equivalent extreme rays of a polyhedral cone $\{a:a^T W\geq 0\}$ is finite and can be bounded by a term of ${n_1 \choose m_1-1}$, which only involves parameters $m_1$ and $n_1$ for $W\in\mathbb R^{m_1\times n_1}$ (similar to the bound on the number of extreme points, $m_1-1$ linearly independent ``constraints" are chosen; for details see \cite{ jing2020complexity,Panik1993,terzer2009large}). Thus, as long as $\{W_\xi\}\subseteq \mathbb R^{m_1\times n_1}$ and $m_1, n_1$ are bounded almost surely, we have $|J|<+\infty$ regardless of the cardinalities of $\{W_\xi\}$. We summarize this into another corollary. 

\begin{corollary}\label{3-stage}

Consider \eqref{ex1} and assume $|J|<\infty$ where 
\begin{equation*}
    |J|=\max_{\xi\in\Xi} \{\text{ \# of extreme rays for the cone  }\{a: a^T W_\xi \geq 0\}\}.
\end{equation*}
Let $\xi^{[N]}=\{\xi_1,...,\xi_N\}$ be IID samples from $\mathbb P$ (consequently $\xi^{[N]}\sim \mathbb P^N$), and $x^\star (\xi^{[N]})$ be the output of any algorithm that is guaranteed to be within the feasible region of SAA. Then the result of \cref{2-stage} still holds. 
\end{corollary}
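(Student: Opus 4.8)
The plan is to reproduce the argument of \cref{2-stage} almost verbatim, observing that the only place where the finiteness of $\{W_\xi\}$ and $\{T_\xi\}$ was actually used there is in defining the indexing function $I(\cdot):\Xi\to[p]$, and that this step can be bypassed by working directly with each $\xi\in\Xi$. Concretely, first I would invoke Farkas' lemma exactly as before: $f(\xi,x)<+\infty$ if and only if $a^T(h_\xi-T_\xi x)\ge 0$ for every $a$ in the polyhedral cone $\mathcal C_\xi=\{a:a^TW_\xi\ge 0\}$. Assuming as usual that the rows of $W_\xi$ are linearly independent, $\mathcal C_\xi$ is pointed and is minimally generated by a unique set of non-equivalent extreme rays $\{a_{\xi j}\}_{j\in J_\xi}$ with $|J_\xi|\le|J|<+\infty$ by hypothesis, so $H_x=\{\xi\in\Xi: a_{\xi j}^TT_\xi x\le a_{\xi j}^T h_\xi,\ \forall j\in J_\xi\}$.

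Next I would pad the ray families to a common length: for each $\xi$ set $a_{\xi j}=\boldsymbol{0}$ for $|J_\xi|<j\le|J|$, which only adds the vacuous constraint $0\le 0$, and then define $y_{\xi j}^T\triangleq a_{\xi j}^T T_\xi$, $z_{\xi j}\triangleq a_{\xi j}^T h_\xi$, and stack these into $y_\xi\in\mathbb R^{|J|n}$, $z_\xi\in\mathbb R^{|J|}$, precisely as in the proof of \cref{2-stage}. With the maps $v_j(\cdot)$ defined there, this yields the same representation $H_x=\bigcap_{j=1}^{|J|}\{(y_\xi,z_\xi): y_\xi^Tv_j(x)\le[z_\xi]_j\}$, now obtained through the (measurable) map $\xi\mapsto(y_\xi,z_\xi)$ rather than through $I(\cdot)$. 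From here the remainder is identical: \cref{dud} bounds the VC dimension of the affine-halfspace class $\{\{(y,z): g_{x,1}((y,z))\ge 0\}\}_{x\in\mathcal X}\cup\{\mathbb R^{|J|n}\times\mathbb R^{|J|}\}$ by $n+1$; \cref{space}, applied to the map $\xi\mapsto(y_\xi,z_\xi)$, transfers this bound to each $\mathcal U_j=\{\{(y_\xi,z_\xi): y_\xi^Tv_j(x)\le[z_\xi]_j\}\}_{x\in\mathcal X}\cup\{\Xi\}$; and \cref{old} combined with the intersection representation of $H_x$ gives $d_{VC}(\mathcal H)\le d_{VC}(\sqcap_{j=1}^{|J|}\mathcal U_j)\le \frac{e}{(e-1)\log 2}|J|(n+1)\log\left(\frac{e}{\log 2}|J|\right)$, which is exactly \eqref{dvc}. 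Applying \cref{main} with this $d_{VC}$ then yields the stated sample complexity and convergence rate.

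The only genuinely new point relative to \cref{2-stage}, and the step I expect to require the most care, is the well-definedness and measurability of the selection $\xi\mapsto(y_\xi,z_\xi)$: when $\{W_\xi\}$ is infinite one must choose, measurably in $\xi$, an ordering of the extreme rays of $\mathcal C_\xi$ together with the padded slots. Since the VC-dimension bounds above hold for \emph{any} fixed such map --- \cref{space} places no restriction on the map $\Xi'\to\Xi$ beyond its being a function --- the precise choice is immaterial for the bound, and the existence of a measurable selection falls under the blanket measurability conventions adopted below \eqref{sp} and elaborated in the Appendix. Everything else, in particular the fact that $\{T_\xi\}$ may be infinite, needs no change, since $T_\xi$ and $h_\xi$ enter only through the already $\xi$-dependent vectors $y_\xi$ and $z_\xi$.
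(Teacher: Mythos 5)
Your proposal is correct and follows essentially the same route as the paper: label the (at most $|J|$) extreme rays of each cone $\{a: a^T W_\xi \ge 0\}$, pad with zero rows up to length $|J|$, and then rerun the proof of \cref{2-stage} verbatim with the map $\xi\mapsto(y_\xi,z_\xi)$ in place of the indexing function $I(\cdot)$. Your added remark on the measurability of the selection is a reasonable extra precaution, but it does not change the argument, which the paper handles under its blanket measurability conventions.
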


% \begin{corollary}\label{3-stage}
	
% 	In the setting of \cref{2-stage}, if $|\{W_\xi\}|$ and $|\{T_\xi\}|$ are infinite, it can still be shown that $|J|<+\infty $ where $$|J|=\max_{\xi\in\Xi} \{\text{ \# of extreme rays for the cone  }\{a: a^T W_\xi \geq 0\}\}.$$ Furthermore, the result of \cref{2-stage} still holds.
% \end{corollary}
\begin{proof}
 Let $\mathcal A_\xi$ be the set of non-equivalent extreme rays of polyhedral cone $\{a: a^T W_\xi \geq 0\}$. Observe $\mathcal H \triangleq \{H_x\}_{x\in\mathcal X}\cup \{\Xi\}$ defined in \eqref{domm} consists of
\begin{equation}\label{uselater}
    H_x=\{\xi: a^T T_\xi x \leq a^T h_\xi, \forall a\in \mathcal A_\xi\}.
\end{equation}
 For all $\xi\in\Xi$, since $|\mathcal A_\xi|\leq |J|<+\infty$, we can label the elements in $\mathcal A_\xi$ by $\{a_{\xi j}\}_{j\in\mathcal [|A_\xi|]}$. Then, define $\{(y_{\xi j},z_{\xi j})\}_{j\in |J|}$ as
\begin{equation*}
	y^T_{\xi j}= 
	\begin{cases}
		a_{\xi j}^T T_\xi , & \text{for } 1 \leq j \leq |\mathcal A_{\xi}| \\
		\boldsymbol{0}, & \text{for } |\mathcal A_{\xi}| < j \leq |J| 
	\end{cases}
\end{equation*}

\begin{equation*}
	z_{\xi j}= 
	\begin{cases}
		a_{\xi j}^T h_\xi , & \text{for } 1 \leq j \leq |\mathcal A_{\xi}| \\
		{0}, & \text{for } |\mathcal A_{\xi}| < j \leq |J|.
	\end{cases}
\end{equation*}
The rest of proof follows exactly as in Corollary \ref{2-stage}.
\end{proof}

Compared with our bound \eqref{s2}, the chain-constrained bound \eqref{s1} relies on the order of the chain $m=q\sum_{i=1}^p |J_i|$. If the cardinality of $\{W_\xi\}$ or $\{T_\xi\}$ gets large (i.e., $qp\gg n$), or potentially infinite (for continuous random variable), then the bound in \eqref{s1} with a sample complexity of $ O(\frac{qp |J|}{\epsilon}+\frac{1}{\epsilon} \log(\frac{1}{\delta}))$ becomes loose or even inapplicable. On the other hand, the VC bound \eqref{s2} with a sample complexity $O(\frac{|J|n}{\epsilon}\log |J| \log (\frac{1}{\epsilon})+\frac{1}{\epsilon} \log(\frac{1}{\delta}))$ maintains the same dependence on the dimension $n$ regardless of the support of $W_\xi$ or $T_\xi$. Moreover, if we use the PAC bound from \cite{vcnew,vcnew0} as mentioned in \cref{t1r}, the bound would be improved to $O(\frac{|J|n}{\epsilon}\log |J|+\frac{1}{\epsilon} \log(\frac{1}{\delta}))$. Finally, in both bounds, the term $|J|$ appears. However, as mentioned previously, an explicit bound for $|J|$ in terms of $m_1,n_1$ can be obtained, where $\{W_\xi\}\subseteq \mathbb R^{m_1 \times n_1}$. We omit it here as it is not essential for our comparison. Finally, the bound in Scenario 3 of \cite{liu2019feasibility} also applies to \eqref{ex1} and is not limited by the order of the chain-structure. However, the bound there is not explicitly computable due to the hidden term $\beta $.

The dependence on the order of the chain $m$ is also discussed in \cite{chen2019sample}. Using ideas similar to the scenario approximation of chance-constrained problems in \cite{calafiore2005uncertain,luedtke2008sample,campi2008exact}, as well as specific properties of linear programming (e.g., existence of basic optimal solutions), \cite{chen2019sample} is able to provide a sample complexity for two-stage stochastic linear programming independent of the cardinalities of $\{W_\xi\}$ or the order of the chain.  Nonetheless, the derivation of our bound in \eqref{s2} does not depend on the linearity of the optimization problem and hence is not limited to two-stage stochastic programming with linear recourse. More specifically, in \cite{chen2019sample}, the first stage $\mathcal X$ is defined by linear constraints $Ax=b$ for some $A\in\mathbb R^{m\times n}$ and the second stage problem bears a linear objective $q(\xi)^Ty$. In contrast, our bound is valid for general $\mathcal X$ in the first stage and $g(\xi,y)$ in the second-stage problem in \eqref{ex1}. That being said, the bound derived in \cite{chen2019sample} has notable strengths in the linear case, in terms of the dependence on problem parameters, gained via a more efficient exploitation of the linear structure. Specifically, the sample complexity in \cite{chen2019sample} is (adapted to the notation in this paper)
\begin{equation}\label{cheny}
    O\bigg(\frac{1}{\epsilon}\Big(nn_1\big(\log(\frac{m_1}{n_1+1})+1\big)+n\big(\log(\frac{m}{n}+2)+\log(\frac{1}{\epsilon})+1\big)+\log(\frac{1}{\delta})\Big)\bigg),
\end{equation}
which has better dependence on $m_1,n_1$, as the dependence on $|J|$ in \eqref{s2} is ${n_1 \choose m_1-1}$ in the worst case. Nonetheless, \eqref{s2} has a similar dependence on $n$ as the bound in \eqref{cheny}, and does not depend on $m$ in \eqref{cheny} at all. Omitting the dependence on these problem size parameters (e.g., constants based on $n,m, m_1, n_1$ and $|J|$), the bound derived in \cite{chen2019sample} is of order $O(\frac{1}{\epsilon}\log(\frac{1}{\delta})+\frac{1}{\epsilon}\log(\frac{1}{\epsilon}))$, which of the same order as the bound \eqref{s2}. Moreover, \eqref{s2} can be slightly improved to be of order  $O(\frac{1}{\epsilon}\log(\frac{1}{\delta})+\frac{1}{\epsilon})$ bound based on \cref{t1r}.
%  \cref{2-stage} appear comparable in the linear context, in the sense that one has better dependence on some problem parameters but worse on others. Specifically

\subsubsection{Mixed-Integer Programming}
The SAA method has also been applied in two-stage stochastic programming with mixed-integer recourse \cite{ahmed2010two, ahmed2002sample,kuccukyavuz2017introduction,bidhandi2017accelerated}. However, due to the presence of integer variable, currently known results based on \eqref{4-1} regarding the feasibility of SAA solutions do not apply. In this section, we provide an original feasibility bound in this case. We consider the following two-stage stochastic mixed-integer programming where $\mathcal X\subseteq \mathbb R^{n-p}\times \mathbb Z^p$ is allowed to contain integer components in the first stage
\begin{equation*}
    \inf_{x\in\mathcal X} F(x) \triangleq f_0(x)+\mathbb E [f(\xi,x)],
\end{equation*}
and the second stage is a mixed-integer program (MIP): 
\begin{equation}\label{ex10}
	\begin{aligned}
		f(\xi,x) \triangleq  \inf_{y} \quad & g(\xi,y,y_0) \\
		 \textrm{s.t.} \quad&  W_\xi y+W^0_\xi y_0+T_\xi x=h_\xi,\\
		& y\in\mathbb R^{n'}_{+}, y_0\in \mathcal Z\subseteq \mathbb Z_{+}^{p'},\\
	\end{aligned}
\end{equation}
for given $n',p'\in\mathbb Z_{+}$. Here $g(\xi,y,y_0)$ can be a general function as in \eqref{ex1}, although for much of theoretical and practical interest (also applicability), it is assumed to be in linear form $g(\xi,y,y_0)=q(\xi)^Ty+q_0(\xi)^Ty_0$. Moreover, most literature also assumes relatively complete recourse by fixing a deterministic recourse matrix (i.e., $W_\xi=W$ and $W_\xi^0=W^0$ with probability 1) such that $\{\boldsymbol{v}\in\mathbb R^{n'}_{+}\times \mathbb Z_{+}^{p'}: [W|W^0]\boldsymbol{v}=\boldsymbol{w}\}$ is non-empty for all relevant $\boldsymbol{w}$. Consequently, the feasibility of SAA solution for two-stage stochastic integer programming without relatively complete recourse has rarely been considered. In fact, due to the general non-convex and discontinuous nature of MIP, specialized approximation or iterative algorithms are usually required and the solutions are no longer guaranteed to be optimal. However, even without relatively complete recourse or optimality guarantee, as mentioned in \cref{t1r}, as long as the solutions output from such algorithms are within the SAA feasible region with probability 1, the feasibility result from \cref{main} still holds. Recall we have assumed the set $\{x: x\in\mathcal X \text{ and } F(x)<+\infty\}$  is non-empty and the SAA feasible region is non-empty with probability 1. 

Under the setting of \cref{main}, it is possible to provide a feasibility bound for \eqref{ex10} when $|\mathcal Z|<+\infty$. This condition is satisfied when $y_0$ is restricted to be binary as in \cite{bidhandi2017accelerated} (i.e., $y_0\in{\{0,1\}}^{p'}$). On the other hand, if the solutions are polynomially bounded by the size of data (e.g., integer linear programming \cite{borosh1976bounds}), then it is also possible to consider solving \eqref{ex10} in a finite, although possibly large bounded set $\mathcal Z \subseteq \mathbb Z^{p'}_+$ thus satisfying $|\mathcal Z|<+\infty$.

\begin{corollary}\label{int}
Consider \eqref{ex10}. Suppose $|\mathcal Z|<+\infty$ and $|J|<+\infty$ where $$|J|=\max_{\xi\in\Xi} \{\text{ \# of extreme rays for the cone  }\{a: a^T W_\xi \geq 0\}\}.$$ Then, let $\xi^{[N]}=\{\xi_1,...,\xi_N\}$ be IID samples from $\mathbb P$ (consequently $\xi^{[N]}\sim \mathbb P^N$), and $x^\star (\xi^{[N]})$ be the output of any algorithm that is guaranteed to be within the feasible region of SAA. Then, if 
\begin{equation}
	 N\geq \frac{4}{\epsilon}\bigg(d_{VC}\log\big(\frac{12}{\epsilon}\big)+\log\big(\frac{2}{\delta}\big)\bigg),
\end{equation}
where 
\begin{equation*}
d_{VC}=\Big(\frac{e}{(e-1)\log2}\Big)^2|\mathcal Z| |J|(n+2) \log (\frac{e|J|}{\log 2})\log (\frac{e|\mathcal Z|}{\log 2}),
\end{equation*}
then we have $\mathbb P^N(V(x^\star (\xi^{[N]}))>\epsilon) \leq  \delta$, for any $0<\delta,\epsilon <1$. Equivalently, in terms of convergence rate, we have
\begin{equation}
    \mathbb P^N (V(x^\star(\xi^{[N]}))>\epsilon) \leq 2 \exp \left(-\frac{N\epsilon}{4}\right) \left(\frac{12}{\epsilon}\right)^{d_{VC}}.
\end{equation}
\end{corollary}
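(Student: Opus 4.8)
The plan is to reduce the feasibility characterization to the one already carried out for \cref{3-stage}, the only new ingredient being a finite union over the integer second-stage block $y_0$. Since $g(\xi,\cdot,\cdot)$ is finite-valued and $\mathcal Z$ is finite, $f(\xi,x)<+\infty$ holds exactly when the constraint system admits a feasible point, i.e.\ when there is some $y_0\in\mathcal Z$ for which the linear system $\{y\ge 0:\ W_\xi y=h_\xi-W^0_\xi y_0-T_\xi x\}$ is non-empty; the (possibly nonlinear) objective $g$ plays no role, just as in \cref{3-stage}. Applying Farkas' lemma exactly as in the derivation of \eqref{dom} and \eqref{uselater}, but with the right-hand side $h_\xi$ replaced by $h_\xi-W^0_\xi y_0$, this is equivalent to $a^TT_\xi x\le a^T(h_\xi-W^0_\xi y_0)$ holding for every extreme ray $a$ of the pointed cone $\{a:\ a^TW_\xi\ge 0\}$. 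Writing $\mathcal A_\xi$ for that finite set of non-equivalent extreme rays ($|\mathcal A_\xi|\le|J|$), the class $\mathcal H=\{H_x\}_{x\in\mathcal X}\cup\{\Xi\}$ of \eqref{domm} therefore has
\begin{equation*}
H_x=\bigcup_{y_0\in\mathcal Z}H_x^{y_0},\qquad H_x^{y_0}\triangleq\{\xi:\ a^TT_\xi x\le a^T(h_\xi-W^0_\xi y_0)\ \ \forall a\in\mathcal A_\xi\}.
\end{equation*}

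Next I would bound, for each fixed $y_0\in\mathcal Z$, the VC dimension of $\mathcal H^{y_0}\triangleq\{H_x^{y_0}\}_{x\in\mathcal X}\cup\{\Xi\}$ by reusing the construction in the proofs of \cref{2-stage} and \cref{3-stage} essentially verbatim: pad $\mathcal A_\xi$ to a list of $|J|$ rays, reparametrize $\xi$ through the vectors $\big(T_\xi^Ta_{\xi j},\,h_\xi^Ta_{\xi j},\,(W^0_\xi y_0)^Ta_{\xi j}\big)$, use \cref{dud} (via \cref{exdud}) to see that each of the $|J|$ half-space classes $\mathcal U_j$ has VC dimension at most $n+2$ (one homogenizing coordinate each for the $h_\xi$ and the $W^0_\xi y_0$ affine terms, on top of the $n$ for $x$), pull back through the reparametrization with \cref{space}, and combine the $|J|$ classes via the intersection half of \cref{old}. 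This gives $d_{VC}(\mathcal H^{y_0})\le\frac{e}{(e-1)\log2}|J|(n+2)\log\big(\frac{e|J|}{\log2}\big)=:D_1$ for every $y_0\in\mathcal Z$. Because $H_x=\bigcup_{y_0\in\mathcal Z}H_x^{y_0}$ and $\Xi=\bigcup_{y_0\in\mathcal Z}\Xi$ with $\Xi\in\mathcal H^{y_0}$, we get $\mathcal H\subseteq\sqcup_{y_0\in\mathcal Z}\mathcal H^{y_0}$, so a second application of \cref{old} — the union half, over the $|\mathcal Z|$ classes — yields $d_{VC}(\mathcal H)\le\frac{e}{(e-1)\log2}\,|\mathcal Z|\,D_1\,\log\big(\frac{e|\mathcal Z|}{\log2}\big)$, which is the $d_{VC}$ in the statement. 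Feeding it into \cref{main} delivers both the sample-complexity bound and the convergence rate.

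The step I expect to be the main obstacle is the handling of the existential quantifier over $y_0$. \cref{old} controls the VC dimension of the class of \emph{all} unions $\bigcup_{y_0}C_{y_0}$ with each $C_{y_0}$ ranging independently over $\mathcal H^{y_0}$, whereas the sets we actually need are the ``diagonal'' unions $\bigcup_{y_0}H_x^{y_0}$ that share a common $x$ across all terms; one has to observe that this diagonal family is a sub-collection of the former, so its VC dimension is no larger. The inclusion itself is elementary, but it is precisely the place where finiteness of $\mathcal Z$ is indispensable — an infinite $\mathcal Z$ would leave an uncountable union that \cref{old} cannot absorb — which is why the corollary imposes $|\mathcal Z|<+\infty$ alongside $|J|<+\infty$. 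Everything else is routine bookkeeping: checking that $f(\xi,x)<+\infty$ is governed purely by feasibility so the objective $g$ is irrelevant (as in \cref{3-stage}), verifying measurability (deferred to the Appendix as elsewhere), padding the extreme-ray lists to the uniform length $|J|$ as $\xi$ varies, and carrying through the harmless doubly-logarithmic factor produced by the two nested uses of \cref{old}.
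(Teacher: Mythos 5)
Your proposal is correct and follows essentially the same route as the paper's proof: Farkas' lemma to write $H_x$ as a union over $y_0\in\mathcal Z$ of intersections over the (padded) extreme rays, a dimension-$(n+2)$ application of \cref{dud} pulled back through \cref{space}, and two nested uses of \cref{old} (intersection over $j$, then union over $y_0$) before invoking \cref{main}. Your explicit remark that the ``diagonal'' family $\{\bigcup_{y_0}H_x^{y_0}\}_{x}$ is a sub-collection of $\sqcup_{y_0\in\mathcal Z}\mathcal H^{y_0}$ is exactly the (implicit) step the paper relies on, so there is no gap.
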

\begin{proof}
    Let $\mathcal A_\xi$ be the set of non-equivalent extreme rays of polyhedral cone $\{a: a^T W_\xi \geq 0\}$.
    Using Farkas' lemma as in \eqref{uselater}, we construct elements of $\mathcal H \triangleq \{H_x\}_{x\in\mathcal X}\cup\{\Xi\}$ defined in \eqref{domm} as
    \begin{equation*}
      H_x=\bigcup_{y_0\in\mathcal Z} \{\xi: a^T (T_\xi x+ W_\xi^0 y_0) \leq a^T h_\xi, \forall a\in \mathcal A_\xi\}, 
    \end{equation*}
     for $(x,y_0)\in\mathcal X \times\mathcal Z$. For all $\xi\in\Xi$, since $|\mathcal A_\xi|\leq |J|<+\infty$, we can label the elements in $\mathcal A_\xi$ by $\{a_{\xi j}\}_{j\in\mathcal [|A_\xi|]}$. Then, given $\xi\in\Xi$, define $\{(y_{\xi j},z_{\xi j},w_{\xi j})\}_{j\in |J|}\subseteq \mathbb R^{n}\times \mathbb R \times \mathbb R^{p'}$ as
        \begin{equation*}
	y^T_{\xi j}= 
	\begin{cases}
		a_{\xi j}^T T_\xi , & \text{for } 1 \leq j \leq |\mathcal A_{\xi}| \\
		\boldsymbol{0}, & \text{for } |\mathcal A_{\xi}| < j \leq |J| 
	\end{cases}
\end{equation*}

\begin{equation*}
	z_{\xi j}= 
	\begin{cases}
		a_{\xi j}^T h_\xi , & \text{for } 1 \leq j \leq |\mathcal A_{\xi}| \\
		{0}, & \text{for } |\mathcal A_{\xi}| < j \leq |J| 
	\end{cases}
\end{equation*}

\begin{equation*}
	w^T_{\xi j}= 
	\begin{cases}
		a_{\xi j}^T W^0_\xi , & \text{for } 1 \leq j \leq |\mathcal A_{\xi}| \\
		\boldsymbol{0}, & \text{for } |\mathcal A_{\xi}| < j \leq |J|. 
	\end{cases}
\end{equation*}
Define $y^T_\xi=(y^T_{\xi 1}, y^T_{\xi 2},..., y^T_{\xi |J|})\in\mathbb R^{|J|n}$, $z_\xi=(z_{\xi 1}, z_{\xi 2},..., z_{\xi |J|})\in\mathbb R^{|J|}$ and $w^T_\xi=(w^T_{\xi 1}, w^T_{\xi 2},..., w^T_{\xi |J|})\in\mathbb R^{|J|p'}$. Moreover, for $j\in[|J|]$, define $v_j(\cdot):\mathbb R^{n}\rightarrow\mathbb R^{|J|n}$, $u_j: \mathbb Z^{p'}\rightarrow\mathbb Z^{|J|p'}$ so that
\begin{equation*}
	[v_{j}(x)]_i= 
	\begin{cases}
	[x]_i, & \text{ for  $(j-1)n+1\leq i \leq jn$ } \\
		{0}, & \text{otherwise} 
	\end{cases}
\end{equation*}
\begin{equation*}
	[u_{j}(x)]_i= 
	\begin{cases}
	[y_0]_i, & \text{ for  $(j-1)p'+1\leq i \leq jp'$ } \\
		{0}, & \text{otherwise}.
	\end{cases}
\end{equation*}
Then, we can redefine 
\begin{equation}\label{intH}
    H_x=\bigcup_{y_0\in\mathcal Z}\bigcap_{j=1}^{|J|} \{(y_\xi,z_\xi,w_\xi): y_{\xi}^T v_j(x)+ w_\xi^T u_j(y_0) \leq [z_{\xi}]_j\}.
\end{equation}
Given $j\in[|J|]$ and $y_0\in\mathcal Z$, let ${e}_j\in\mathbb R^{|J|}$ be the vector with 1 in the $j$-th component and 0 otherwise. Define a class of function $\mathcal G=\{g_{(x,c_1,c_2)}(\cdot)\}_{(x,c_1,c_2)\in\mathbb R^n\times\mathbb R\times\mathbb R}$ on $\mathbb R^{|J|(n+1+p')}$ such that, given $(y,z,w)\in\mathbb R^{|J|n}\times \mathbb R^{|J|}\times \mathbb R^{|J|p'}$,
\begin{equation*}
g_{(x,c_1,c_2)}((y,z,w))=[y,z,w]^T  \begin{bmatrix}
           -v_j(x) \\
           c_1\cdot e_j\\
           -c_2\cdot u_j(y_0)
\end{bmatrix}.
\end{equation*}
It is straightforward to check $\mathcal G$ is a finite-dimensional vector space of functions with $\text{dim } \mathcal G \leq n+2$.  Then, according to \cref{dud}, the VC dimension of $$\{\{(y,z,w)\in\mathbb R^{|J|n}\times \mathbb R^{|J|}\times\mathbb R^{|J|p'}: g_{(x,c_1,c_2)}((y,z,w))\geq 0\}\}_{(x,c_1,c_2)\in\mathbb R^n\times\mathbb R\times\mathbb R}$$
is at most $n+2$. Moreover, by letting $(x,c_1,c_2)=\boldsymbol{0}$, it can be seen that the above collection of sets include the set $\mathbb R^{|J|n}\times \mathbb R^{|J|}\times \mathbb R^{|J|p'}$. Consequently, as a smaller collection of sets, the VC dimension of $$\{\{(y,z,w)\in\mathbb R^{|J|n}\times \mathbb R^{|J|}\times\mathbb R^{|J|p'}: g_{(x,1,1)}((y,z,w))\geq 0\}\}_{x\in\mathcal X} \cup \{\mathbb R^{|J|n}\times \mathbb R^{|J|}\times \mathbb R^{|J|p'}\}$$
is also at most $n+2$. Thus, for each $j\in [|J|]$, it follows from \cref{space} that the VC dimension of $$\mathcal U_j^{y_0}=\{\{(y_\xi,z_\xi, w_\xi): y_{\xi}^T v_j(x)+ w_\xi^T u_j(y_0) \leq [z_{\xi}]_j\}\}_{x\in\mathcal X} \cup \{\Xi\}$$ is at most $n+2$. Consequently, given $y_0\in\mathcal Z$, it follows from \cref{old} that 
\begin{align*}
    d_{VC}(\sqcap_{j=1}^{|J|}\mathcal U_j^{y_0}) \leq \frac{e}{(e-1)\log2} |J|(n+2) \log (\frac{e}{\log 2}|J|)
\end{align*}
and then
\begin{align*}
    d_{VC}\Big(\sqcup_{y_0\in\mathcal Z}\Big(\sqcap_{j=1}^{|J|}\mathcal U_j^{y_0}\Big)\Big)\leq \Big(\frac{e}{(e-1)\log2}\Big)^2|\mathcal Z| |J|(n+2) \log (\frac{e|J|}{\log 2})\log (\frac{e|\mathcal Z|}{\log 2}).
\end{align*}
Thus, for $\mathcal H$ defined in \eqref{intH}, we have 
$$d_{VC}(\mathcal H)\leq\Big(\frac{e}{(e-1)\log2}\Big)^2|\mathcal Z| |J|(n+2) \log (\frac{e|J|}{\log 2})\log (\frac{e|\mathcal Z|}{\log 2}).$$ The rest of the proof follows as in \cref{2-stage}.
\end{proof}
As we can see, the portion of infeasible SAA solutions (not necessarily optimal) still decreases exponentially as the sample size $N$ increases, although it is worth noting that the rate now depends on $|\mathcal Z|$ as well.

\subsubsection{Low-Dimensional Models}
There has been a growing literature of applying and analyzing SAA in high-dimensional stochastic programming by leveraging low-dimensional structures, e.g., sparsity, low-rankness \cite{bugg2021logarithmic, liu2019regularized, liu2019sample}. Most of these results focus on the optimization accuracy of SAA as in \cite{shapiro2005complexity}, which compute a sample complexity of 
\begin{equation}\label{scomp}
    N=O\bigg(\frac{n}{\epsilon^2}\log\frac{1}{\epsilon}+\frac{1}{\epsilon^2}\log\frac{1}{\delta}\bigg)
\end{equation}
to guarantee 
\begin{equation}\label{optimal}
    \mathbb P^{N}\bigg(F(x^\star(\xi^{[N]})-\inf_{x\in\mathcal X} F(x) > \epsilon \bigg) \leq \delta.
\end{equation}
Typical results exploring low-dimensional structure aim to reduce the dependence of \eqref{scomp} on $n$, especially for $n\gg\frac{1}{\epsilon}$. For example, the main result on sample complexity in \cite{liu2019sample} trades a worse dependence on $\epsilon$ for a better dependence on $n$ in high-dimensional problem to obtain
\begin{equation}\label{newsp}
  N=O\bigg(\frac{n_0}{\epsilon^3}\Big(\log\frac{n}{\epsilon}\Big)^{1.5}+\frac{1}{\epsilon^2}\log\frac{1}{\delta}\bigg),
\end{equation}
through certain sparse modifications of SAA, where $n_0$ is the number of non-zero entries for the optimal solution of \eqref{sp} with $n_0\ll n$. Compared to \eqref{scomp}, the sample complexity \eqref{newsp} depends polynomially on $n_0$ and $\log n$, instead of $n$ which could become prohibitively large in high-dimensional problems. 

As we shall see, this type of trade-off in sample complexity is typical and it also applies to the feasibility guarantee. In this section, we illustrate how low-dimensional modeling assumptions can allow alternative feasibility bounds for two-stage stochastic programming, especially in high-dimensional settings similar to \eqref{newsp}. In particular, we still consider \eqref{ex1}, but focus on the setting where the optimal solution of \eqref{sp} (i.e., $\text{argmin } F(x)$) and the solution output of SAA (i.e., $x^\star(\xi^{[N]})$) are both sparse:
\begin{equation*}
    \mathcal X\subseteq \{x\in\mathbb R^n: \|x\|_0 \leq n_0\}
\end{equation*}
with $n_0\ll n$. As before, we do not require $x^\star(\xi^{[N]})$ to be optimal for SAA, but only to be within the feasible domain of SAA with probability 1.

The sparsity of the solution is usually achieved by regularization penalty which includes convex penalty, i.e., $\ell_1$-norm (LASSO), $\ell_2$-norm (ridge) or $\ell_p$-norm ($1\leq p\leq+\infty$), and nonconvex penalty, i.e., folded concave penalty  (see, e.g., \cite{liu2019sample}) or $\ell_0$-norm (subset selection with mixed-integer programming; see, e.g., \cite{bertsimas2016best}). While the level of sparsity of $x^\star(\xi^{[N]})$ can be controlled by hyperparameters, we do not generally know the true value of $\| \text{argmin } F(x)\|_0$. In practice, the choice of $n_0$ is chosen either by model selection techniques or fixed beforehand based on domain-specific knowledge. To simplify the problem, we assume the choice of $n_0$, the level of sparsity for candidates solutions, is pre-set. Ideally, we have $\| \text{argmin } F(x)\|_0\leq n_0 \ll n$ so that the space of candidate solutions is rich enough to include the true sparse solution, but still highly sparse compared to problem dimension $n$. To provide our alternative feasibility bound, we first present the following lemma on the VC dimensions of sparse linear classifiers.

\begin{lemma}\label{sparse}
    Let $\mathcal U=\{\{(y,z)\in\mathbb R^d\times\mathbb R: y^T x \leq z \}\}_{x\in\mathbb R^d, \|x\|_0\leq d_0}$. Then $$d_{VC}( \mathcal U \cup \{\mathbb R^d \times \mathbb R\})\leq 2(d_0+1)\log_2\Big(\frac{de+e}{d_0+1}\Big).$$
\end{lemma}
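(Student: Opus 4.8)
The plan is to peel the sparse class into a union of non-sparse pieces, one per choice of support, bound each piece with \cref{dud} exactly as in the proof of \cref{2-stage}, and then reassemble via the Sauer--Shelah lemma, using an \emph{a priori} cap on the size of any shattered set to keep the final constant equal to $2$. Throughout I may assume $d_0\le d$, since otherwise the sparsity restriction is vacuous and the claim follows from \cref{exdud} together with the monotonicity of VC dimension under passing to subclasses.

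The first step is the support decomposition. For $S\subseteq[d]$ with $|S|=d_0$, set $\mathcal U_S\triangleq\{\{(y,z)\in\mathbb R^d\times\mathbb R: y^Tx\le z\}: \text{supp}(x)\subseteq S\}$. Every $x$ with $\|x\|_0\le d_0$ is supported on some such $S$ (here we use $d_0\le d$), so $\mathcal U\cup\{\mathbb R^d\times\mathbb R\}=\bigcup_{|S|=d_0}\big(\mathcal U_S\cup\{\mathbb R^d\times\mathbb R\}\big)$, a union of $\binom{d}{d_0}$ classes. For a fixed $S$ the condition $y^Tx\le z$ involves only $z$ and the coordinates of $y$ indexed by $S$, so --- introducing a free scalar $c$ as in the proof of \cref{2-stage} so that the whole space appears at $c=0$ --- the sets $\{(y,z): cz-\sum_{i\in S}x_iy_i\ge 0\}$ are cut out by the $(d_0+1)$-dimensional vector space of functions spanned by $(y,z)\mapsto z$ and $(y,z)\mapsto y_i$, $i\in S$; hence \cref{dud} yields $d_{VC}(\mathcal U_S\cup\{\mathbb R^d\times\mathbb R\})\le d_0+1$. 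Running the identical argument with $S=[d]$ gives the cap $d_{VC}(\mathcal U\cup\{\mathbb R^d\times\mathbb R\})\le d+1$, so every set shattered by $\mathcal U\cup\{\mathbb R^d\times\mathbb R\}$ has at most $d+1$ points.

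The last step combines these via growth functions. Let $k$ points be shattered by $\mathcal U\cup\{\mathbb R^d\times\mathbb R\}$. If $k\le d_0$ the asserted bound holds trivially, since $\log_2\frac{e(d+1)}{d_0+1}\ge\log_2 e>\tfrac12$ forces the right-hand side to exceed $d_0+1>k$. If instead $k\ge d_0+1$, then counting the subsets of the shattered set that each piece can pick out and applying Sauer--Shelah to each of the $\binom{d}{d_0}$ pieces gives $2^k\le\binom{d}{d_0}\sum_{i=0}^{d_0+1}\binom{k}{i}\le\big(e(d+1)/(d_0+1)\big)^{d_0+1}\big(ek/(d_0+1)\big)^{d_0+1}$, where I use $\binom{d}{d_0}\le\binom{d+1}{d_0+1}\le(e(d+1)/(d_0+1))^{d_0+1}$ and the standard estimate $\sum_{i=0}^{m}\binom{k}{i}\le(ek/m)^m$ valid for $1\le m\le k$. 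Taking base-$2$ logarithms and then using the cap $k\le d+1$ to bound the stray inner $k$ by $d+1$ yields $k\le 2(d_0+1)\log_2\frac{e(d+1)}{d_0+1}=2(d_0+1)\log_2\frac{de+e}{d_0+1}$, which is the claim.

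The one delicate point is producing the clean leading constant $2$ rather than something larger: the bare union bound leaves an implicit inequality of the form $k\le(d_0+1)\log_2\frac{e(d+1)}{d_0+1}+(d_0+1)\log_2\frac{ek}{d_0+1}$ carrying an unwanted $\log_2 k$ term, and the tidy way to eliminate it is exactly the a priori inequality $k\le d+1$ supplied by \cref{exdud}. Without that observation one can still close the implicit inequality by a routine but messier argument --- still obtaining a bound of the same order $d_0\log\frac{d}{d_0}$, but with a worse constant. Everything else (the support decomposition, the per-support use of \cref{dud}, and the two elementary binomial estimates) is routine.
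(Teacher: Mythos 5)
Your proof is correct, but it takes a genuinely different route from the paper's. The paper's proof is essentially a one-step reduction: it observes that $\mathcal U \cup \{\mathbb R^d\times\mathbb R\}$ embeds into the sparse-classifier class $\mathcal U'=\{\{(y,z): y^Tx+zc\le 0\}\}_{\|x\|_0+\|c\|_0\le d_0+1}$ (the whole space arising from $(x,c)=\boldsymbol 0$) and then cites Lemma 1 of \cite{abramovich2018high}, which supplies the bound $2(d_0+1)\log_2\big(\frac{de+e}{d_0+1}\big)$ directly, together with monotonicity of the VC dimension under inclusion. You instead reprove that bound from first principles: the support decomposition into $\binom{d}{d_0}$ pieces, the per-support application of \cref{dud} giving VC dimension $d_0+1$ for each piece (exactly the device used in the proof of \cref{2-stage}), the union-of-growth-functions/Sauer--Shelah step, and the a priori cap $k\le d+1$ from the non-sparse class (\cref{exdud}/\cref{dud}) to absorb the stray $\log_2 k$ term and land on the constant $2$. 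I checked the details: the case split $k\le d_0$ versus $k\ge d_0+1$ correctly licenses the estimate $\sum_{i=0}^{m}\binom{k}{i}\le(ek/m)^m$, the bound $\binom{d}{d_0}\le\binom{d+1}{d_0+1}\le(e(d+1)/(d_0+1))^{d_0+1}$ is valid, and $e(d+1)/(d_0+1)=(de+e)/(d_0+1)$ matches the stated constant; the only hand-wave is the degenerate case $d_0>d$, which lies outside the lemma's intended regime (and is the regime in which the cited result is also stated), so your standing assumption $d_0\le d$ is harmless. The trade-off is clear: the paper's argument is shorter and defers the combinatorics to an external reference, while yours is self-contained within the paper's own toolkit (\cref{dud}, Sauer--Shelah) and makes the provenance of the constant $2$ transparent.
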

\begin{proof}
    It follows from Lemma 1 in \cite{abramovich2018high} that the VC dimension of $$\mathcal U'\triangleq \{\{(y,z)\in\mathbb R^d\times\mathbb R: y^Tx+z\cdot c\leq 0\}\}_{(x,c)\in\mathbb R^d\times \mathbb R, \|x\|_0+\|c\|_0\leq d_0+1},$$
satisfies $d_{VC}(\mathcal U') \leq 2(d_0+1)\log_2\Big(\frac{de+e}{d_0+1}\Big)$. Now notice $\mathcal U \subseteq \mathcal U'$, since we can rewrite $$\mathcal U= \{\{(y,z)\in\mathbb R^d\times\mathbb R: y^Tx+z\cdot c\leq 0\}\}_{x\in\mathbb R^d, c=-1, \|x\|_0\leq d_0}.$$
On the other hand, $\mathbb R^d\times \mathbb R\in \mathcal U'$ by picking $(x,c)=\boldsymbol 0$. Consequently, $$d_{VC}(\mathcal U \cup \{\mathbb R^d\times\mathbb R\}) \leq d_{VC}(\mathcal U') \leq 2(d_0+1)\log_2\Big(\frac{de+e}{d_0+1}\Big).$$
\text{ }
\end{proof}

Now we present the alternative version of \cref{3-stage} under sparsity.
\begin{corollary}\label{spaa}
Consider \eqref{ex1}. Suppose $\mathcal X\subseteq \{x\in\mathbb R^n: \|x\|_0 \leq n_0\}$ and $|J|<+\infty$ where $$|J|=\max_{\xi\in\Xi} \{\text{ \# of extreme rays for the cone  }\{a: a^T W_\xi \geq 0\}\}.$$ Then, let $\xi^{[N]}=\{\xi_1,...,\xi_N\}$ be IID samples from $\mathbb P$ (consequently $\xi^{[N]}\sim \mathbb P^N$), and $x^\star (\xi^{[N]})$ be the output of any algorithm that is guaranteed to be within the feasible region of SAA. Then, if 
\begin{equation}
	 N\geq \frac{4}{\epsilon}\bigg(d_{VC}\log\big(\frac{12}{\epsilon}\big)+\log\big(\frac{2}{\delta}\big)\bigg),
\end{equation}
where 
\begin{equation*}
d_{VC}=\bigg(\frac{e}{(e-1)\log2} |J|\Big(2(n_0+1)\log_2\Big(\frac{ne+e}{n_0+1}\Big)\Big)\cdot \log \big(\frac{e}{\log2}\cdot |J|\big)\bigg),
\end{equation*}
then we have $\mathbb P^N(V(x^\star (\xi^{[N]}))>\epsilon) \leq  \delta$, for any $0<\delta,\epsilon <1$. Equivalently, in terms of convergence rate, we have
\begin{equation}
    \mathbb P^N (V(x^\star(\xi^{[N]}))>\epsilon) \leq 2 \exp \left(-\frac{N\epsilon}{4}\right) \left(\frac{12}{\epsilon}\right)^{d_{VC}}.
\end{equation}
\end{corollary}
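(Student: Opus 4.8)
The plan is to run the argument of \cref{3-stage} essentially verbatim, substituting the sparse linear-classifier bound of \cref{sparse} for the $n+1$ bound coming from \cref{dud}. First I would apply Farkas' lemma exactly as in \eqref{uselater}: with $\mathcal A_\xi$ denoting the set of non-equivalent extreme rays of $\{a:a^TW_\xi\ge0\}$ (finite, since $|J|<\infty$), the class $\mathcal H=\{H_x\}_{x\in\mathcal X}\cup\{\Xi\}$ has members
\begin{equation*}
    H_x=\{\xi: a^TT_\xi x\le a^Th_\xi,\ \forall a\in\mathcal A_\xi\}=\bigcap_{j=1}^{|J|}\{\xi: y_{\xi j}^Tx\le z_{\xi j}\},
\end{equation*}
where, after fixing a labeling $\{a_{\xi j}\}$ of $\mathcal A_\xi$ and padding with zero rays up to $|J|$ as in \cref{3-stage}, $y_{\xi j}^T=a_{\xi j}^TT_\xi\in\mathbb R^n$ (or $\boldsymbol{0}$ for $j>|\mathcal A_\xi|$) and $z_{\xi j}=a_{\xi j}^Th_\xi$ (or $0$). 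The point to keep in mind is that here I do \emph{not} stack the $y_{\xi j}$ into a single vector in $\mathbb R^{|J|n}$ before invoking \cref{sparse}, since that would replace $n$ by $|J|n$ inside the logarithm; instead I treat each block separately, which is what produces the $(ne+e)/(n_0+1)$ term in the stated $d_{VC}$.

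Second, I would bound the VC dimension of each $\mathcal U_j\triangleq\{\{\xi: y_{\xi j}^Tx\le z_{\xi j}\}\}_{x\in\mathcal X}\cup\{\Xi\}$. Writing $f_j(\xi)\triangleq(y_{\xi j},z_{\xi j})\in\mathbb R^n\times\mathbb R$, every member of $\mathcal U_j$ is the $f_j$-preimage of a set in $\{\{(\tilde y,z)\in\mathbb R^n\times\mathbb R:\tilde y^Tx\le z\}\}_{x\in\mathbb R^n,\ \|x\|_0\le n_0}\cup\{\mathbb R^n\times\mathbb R\}$, using $\mathcal X\subseteq\{x:\|x\|_0\le n_0\}$ and $f_j^{-1}(\mathbb R^n\times\mathbb R)=\Xi$. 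Hence \cref{space} followed by \cref{sparse} (with $d=n$, $d_0=n_0$) gives $d_{VC}(\mathcal U_j)\le 2(n_0+1)\log_2\big((ne+e)/(n_0+1)\big)$ for every $j\in[|J|]$.

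Finally, since $H_x=\bigcap_{j=1}^{|J|}\{\xi: y_{\xi j}^Tx\le z_{\xi j}\}$ (the padded zero rays making each $x\in\mathcal X$ admissible in every block) and each $\mathcal U_j$ contains $\Xi$, we have $\mathcal H\subseteq\sqcap_{j=1}^{|J|}\mathcal U_j$, so $d_{VC}(\mathcal H)\le d_{VC}(\sqcap_{j=1}^{|J|}\mathcal U_j)$; \cref{old} with $m=|J|$ and common $d_j=2(n_0+1)\log_2((ne+e)/(n_0+1))$ then gives
\begin{equation*}
    d_{VC}(\mathcal H)\le\frac{e}{(e-1)\log2}\,|J|\Big(2(n_0+1)\log_2\Big(\frac{ne+e}{n_0+1}\Big)\Big)\log\Big(\frac{e}{\log2}\,|J|\Big),
\end{equation*}
which is exactly the claimed $d_{VC}$; the sample-complexity inequality and the exponential convergence rate then follow directly from \cref{main} and its corollary. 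I do not anticipate any genuine obstacle, as the structure is identical to \cref{3-stage}; the one thing requiring care is precisely the bookkeeping just described — handling the $|J|$ half-space constraints blockwise so that \cref{sparse} is applied on $\mathbb R^n$ rather than on $\mathbb R^{|J|n}$, and routing the auxiliary ambient sets ($\{\Xi\}$, $\{\mathbb R^n\times\mathbb R\}$) through \cref{space} and \cref{old} cleanly, with measurability of the maps $f_j$ handled as in \cref{3-stage}.
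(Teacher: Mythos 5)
Your proposal is correct and follows essentially the same route as the paper's own proof: reduce $H_x$ via Farkas' lemma to an intersection of $|J|$ padded half-space constraints, bound each block's VC dimension by \cref{sparse} (with $d=n$, $d_0=n_0$) pulled back through \cref{space}, and combine via \cref{old} before invoking \cref{main}. Your explicit remark about handling the blocks separately on $\mathbb R^n\times\mathbb R$ rather than stacking into $\mathbb R^{|J|n}$ is exactly the bookkeeping the paper's (terser) proof relies on to get the $\log_2\big((ne+e)/(n_0+1)\big)$ factor.
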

\begin{proof}
    As in \cref{3-stage}, let $\mathcal A_\xi$ be the set of non-equivalent extreme rays of polyhedral cone $\{a: a^T W_\xi \geq 0\}$. Observe $\mathcal H \triangleq \{H_x\}_{x\in\mathcal X}\cup \{\Xi\}$ defined in \eqref{domm} consists of
\begin{equation*}
    H_x=\{\xi: a^T T_\xi x \leq a^T h_\xi, \forall a\in \mathcal A_\xi\}.
\end{equation*}

For all $\xi\in\Xi$, since $|\mathcal A_\xi|\leq |J|<+\infty$, we can label the elements in $\mathcal A_\xi$ by $\{a_{\xi j}\}_{j\in\mathcal [|A_\xi|]}$. Then, define $\{(y_{\xi j},z_{\xi j})\}_{j\in |J|}$ as

\begin{equation*}
	y^T_{\xi j}= 
	\begin{cases}
		a_{\xi j}^T T_\xi , & \text{for } 1 \leq j \leq |\mathcal A_{\xi}| \\
		\boldsymbol{0}, & \text{for } |\mathcal A_{\xi}| < j \leq |J| 
	\end{cases}
\end{equation*}

\begin{equation*}
	z_{\xi j}= 
	\begin{cases}
		a_{\xi j}^T h_\xi , & \text{for } 1 \leq j \leq |\mathcal A_{\xi}| \\
		{0}, & \text{for } |\mathcal A_{\xi}| < j \leq |J|.
	\end{cases}
\end{equation*}
Then, we can redefine 
\begin{equation}
    H_x=\bigcap_{j=1}^{|J|} \{ \xi : y_{\xi j}^T x \leq z_{\xi j}\}.
\end{equation}
Since $\mathcal X \subseteq \{x\in\mathbb R^n: \|x\|_0 \leq n_0\}$, it follows from \cref{old}, \cref{space} and \cref{sparse} that
\begin{equation*}
    d_{VC}(\mathcal H) \leq \bigg(\frac{e}{(e-1)\log2} |J|\Big(2(n_0+1)\log_2\Big(\frac{ne+e}{n_0+1}\Big)\Big)\cdot \log \big(\frac{e}{\log2}\cdot |J|\big)\bigg).
\end{equation*}
The rest of the proof follows from \cref{main}.
\end{proof}

As we can see, by leveraging the sparsity of the solutions, the sample complexity in \cref{spaa} is improved to $N=O\big(\frac{n_0 \log n}{\epsilon}\log\frac{1}{\epsilon}+\frac{1}{\epsilon}\log\frac{1}{\delta}\big)$ versus the $N=O\big(\frac{ n}{\epsilon}\log\frac{1}{\epsilon}+\frac{1}{\epsilon}\log\frac{1}{\delta}\big)$ in \cref{2-stage}, similar to the trade-off in high-dimensional SAA between \eqref{scomp} and \eqref{newsp}.

\subsection{Special Structures}
In this section, we consider two special structures. The first one is the chain-constrained domain considered in \cite{liu2019feasibility}. The second one is a finite feasible domain considered in \cite{chen2019sample}.

\subsubsection{Chain-Constrained Domain}
In previous sections, \cref{main} is used to analyze example \eqref{ex1} without using the chain-constrained structure as in \cite{liu2019feasibility}. However, it is worth noting that \cref{main} still offers an explicit bound on the feasibility of $x^\star(\xi^{[N]})$ based solely on the chain-constrained structure, although at a slightly worse sample complexity than \cite{liu2019feasibility}. To see this, notice the VC dimension of any chain-constrained domain can be directly bounded.

\begin{lemma}\label{l1}
	If $\text{dom } f_\xi$ has a chain-constrained domain of order $m$, then the VC dimension of $\mathcal H=\{H_x\}_{x\in\mathcal X}\cup\{\Xi\}$ in \eqref{domm} satisfies
	 $$d_{VC}(\mathcal H)\leq\frac{e}{(e-1)\log2} m \log \big(\frac{e}{\log2}\cdot m\big)=O( m\log m).$$
\end{lemma}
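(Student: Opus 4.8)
The plan is to dualize the chain structure. The chains $\{U_k^\xi\}_{\xi\in\Xi}$ are subsets of $\mathcal X$ indexed by $\xi$, whereas $\mathcal H$ consists of subsets of $\Xi$ indexed by $x$; the bridge between the two is the incidence relation $f(\xi,x)<+\infty$. Since $\text{dom } f_\xi=\bigcap_{k=1}^m U_k^\xi$, for every $x\in\mathcal X$ we have $x\in\text{dom } f_\xi \iff x\in U_k^\xi$ for all $k$, so
\begin{equation*}
H_x=\{\xi\in\Xi : x\in\text{dom } f_\xi\}=\bigcap_{k=1}^m\{\xi\in\Xi : x\in U_k^\xi\}.
\end{equation*}
Accordingly I would introduce, for each $k\in[m]$, the auxiliary class $\mathcal C_k\triangleq\{\{\xi\in\Xi : x\in U_k^\xi\}\}_{x\in\mathcal X}\cup\{\Xi\}$ and observe that $\mathcal H\subseteq\sqcap_{k=1}^m\mathcal C_k$ in the notation of \cref{old}: each $H_x$ is an intersection of one member from each $\mathcal C_k$, and the element $\Xi\in\mathcal H$ equals $\bigcap_{k=1}^m\Xi$ with $\Xi\in\mathcal C_k$. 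Monotonicity of VC dimension under class inclusion then gives $d_{VC}(\mathcal H)\le d_{VC}(\sqcap_{k=1}^m\mathcal C_k)$.

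The crux is to show $d_{VC}(\mathcal C_k)\le 1$ for each $k$, and this is the only place the chain hypothesis enters. Suppose, for contradiction, that some two-point set $\{\xi_1,\xi_2\}$ were shattered by $\mathcal C_k$. Picking out the singleton $\{\xi_1\}$ cannot be accomplished by $\Xi$ (which also contains $\xi_2$), so there is $x_1\in\mathcal X$ with $\xi_1\in\{\xi : x_1\in U_k^\xi\}$ and $\xi_2\notin\{\xi : x_1\in U_k^\xi\}$, i.e., $x_1\in U_k^{\xi_1}\setminus U_k^{\xi_2}$; similarly, picking out $\{\xi_2\}$ yields $x_2\in U_k^{\xi_2}\setminus U_k^{\xi_1}$. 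But then $U_k^{\xi_1}\not\subseteq U_k^{\xi_2}$ and $U_k^{\xi_2}\not\subseteq U_k^{\xi_1}$, contradicting that $\{U_k^\xi\}_{\xi\in\Xi}$ is a chain. Hence no two-point set is shattered and $d_{VC}(\mathcal C_k)\le 1$ (adding $\{\Xi\}$ does not spoil this, since the obstruction comes solely from the two singletons).

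Finally, setting $d_k\triangleq d_{VC}(\mathcal C_k)\le 1$ and $d\triangleq\sum_{k=1}^m d_k\le m$, I would apply \cref{old} to the $m$ classes $\mathcal C_1,\dots,\mathcal C_m$ to obtain
\begin{equation*}
d_{VC}(\mathcal H)\le d_{VC}\Big(\sqcap_{k=1}^m\mathcal C_k\Big)\le\frac{e}{(e-1)\log 2}\,d\,\log\Big(\frac{e}{\log 2}m\Big)\le\frac{e}{(e-1)\log 2}\,m\,\log\Big(\frac{e}{\log 2}m\Big),
\end{equation*}
where the last inequality uses $d\le m$ together with monotonicity of the right-hand side in $d$; this is $O(m\log m)$ as claimed. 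The step I expect to require the most care is the dualization itself — recognizing that the chain property of $\{\text{dom } f_\xi\}$ is equivalent to each dual class $\mathcal C_k$ having VC dimension at most one — after which the result is just bookkeeping with the two VC lemmas already in hand.
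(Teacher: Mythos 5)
Your proposal is correct and follows essentially the same route as the paper's proof: you dualize the chains into classes of subsets of $\Xi$, show each dual class cannot shatter two points (the paper phrases this as each $\{W_k^x\}_{x\in\mathcal X}$ being a chain, hence of VC dimension at most $1$), and then apply \cref{old} with $d\le m$. No gaps; the handling of the extra element $\Xi$ and the monotonicity step are both fine.
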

\begin{proof}
	
Recall \cref{chain}. Since $\text{dom } f_\xi$ is a chain-constrained domain of order $m$, we can write $\text{dom } f_\xi=\bigcap_{k=1}^m U^\xi_k$ where each $U_k^\xi \in \{U_k^{\xi^\prime} \}_{\xi^\prime\in\Xi}$ is a chain living on $\mathcal X\subseteq \mathbb R^n$ indexed by $\xi\in\mathbb R^r$. Now, for $k\in[m]$, define $W^x_k\triangleq\{\xi: x\in U_k^\xi\}$, and we have from \eqref{domm} that $H_x=\{\xi: x\in \text{ dom} f_\xi\}=\{\xi: x\in\bigcap_{k=1}^m U^\xi_k\}=\bigcap_{k=1}^m W_k^x$. We show, for each $k\in [m]$, $\{W_k^x\}_{x\in\mathcal X}$ is a chain as well. Suppose this is not the case, then there exists $x_1,x_2\in\mathcal X$ such that $W_k^{x_1}\not\subseteq W_k^{x_2}$ and $W_k^{x_2}\not\subseteq W_k^{x_1}$. This implies there exist $\xi_1\in W_k^{x_1}$ and $\xi_2\in W_k^{x_2}$ such that $\xi_1\notin W_k^{x_2} $ and $\xi_2\notin W_k^{x_1} $. This further implies $x_1\in U_k^{\xi_1}, x_2 \notin U_k^{\xi_1}$ and $x_2\in U_k^{\xi_2}, x_1 \notin U_k^{\xi_2}$. Consequently, neither $U_k^{\xi_1}\subseteq U_k^{\xi_2}$ nor $U_k^{\xi_2}\subseteq U_k^{\xi_1}$ is true, contradicting the assumption that $\{U_k^{\xi} \}_{\xi\in\Xi}$ is a chain. Thus, $\{W_k^x\}_{x\in\mathcal X}$ is a chain on $\Xi$ for each $k\in[m]$. It then follows trivially  $\{W_k^x\}_{x\in\mathcal X}\cup\{\Xi\}$ is also a chain on $\Xi$ for each $k\in[m]$. Consequently, $\mathcal H=\{H_x\}_{x\in\mathcal X}\cup\{\Xi\}$ is a chain-constrained domain of order $m$.

On the other hand, the VC dimensions of chains $\{U^\omega\}_{\omega\in I}$ are at most 1 because they cannot shatter any two points. In particular, if $\{x_1,x_2\}$ are two points living on the same space as $\{U^\omega\}_{\omega\in I} $, the shattering of  $\{x_1,x_2\}$ requires $x_1 \in U^{\omega_1}, x_2 \notin U^{\omega_1}$ and $x_2\in U^{\omega_2}, x_1 \notin U^{\omega_2}$ for some $U^{\omega_1},U^{\omega_2} \in \{U^\omega\}_{\omega\in I} $. If this were to happen, then neither $U^{\omega_1}\subseteq U^{\omega_2}$ nor $U^{\omega_2}\subseteq U^{\omega_1}$ would be true, contradicting the definition of a chain. Then, if $\{\mathcal U_k\}_{k\in[m]}$ are the $m$ chains consisting of a chain-constrained domain $\mathcal U$ of order $m$ where each $U\in\mathcal U$ is of the form $U=\bigcap_{k=1}^m U_k$ for some $U_k \in \mathcal U_k$, it again follows from Theorem 1.1 in \cite{van2009note} that 
\begin{equation*}
	d_{VC}(\mathcal U) \leq d_{VC}(\sqcap_{k=1}^m \mathcal U_k) \leq \frac{e}{(e-1)\log2} m \log (\frac{e}{\log2}\cdot m),
\end{equation*}
where $\sqcap_{k=1}^m \mathcal U_k\triangleq \bigg\{\bigcap_{k=1}^m U_k: U_k \in \mathcal U_k, k \in [m]\bigg\}$.  The result follows now from the fact that $\mathcal H$ is a chain of order $m$.
\end{proof}
\cref{l1} combined with \cref{main} can provide an explicit sample complexity for feasibility.

\begin{corollary}\label{f}
If $\text{dom } f_\xi$ has a chain-constrained domain of order $m$, then \cref{main} guarantees that for \begin{equation}
	 N\geq\frac{4}{\epsilon}\bigg(\Big(\frac{e}{(e-1)\log2} m \log \big(\frac{e}{\log2}\cdot m\big)\Big)\log\big(\frac{12}{\epsilon}\big)+\log\big(\frac{2}{\delta}\big)\bigg),
\end{equation}
we have
	\begin{equation*}
\mathbb P^N(V(x^\star (\xi^{[N]}))>\epsilon) \leq  \delta.
	\end{equation*}
for any $0<\delta,\epsilon <1$.
\end{corollary}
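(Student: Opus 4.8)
The plan is to obtain this corollary by directly combining \cref{l1} with \cref{main}. First I would apply \cref{l1}: since $\text{dom } f_\xi$ has a chain-constrained domain of order $m$, the class $\mathcal H=\{H_x\}_{x\in\mathcal X}\cup\{\Xi\}$ defined in \eqref{domm} has finite VC dimension, bounded by $d^\star\triangleq \frac{e}{(e-1)\log 2}\, m\log\big(\frac{e}{\log 2}\cdot m\big)$. In particular $d_{VC}(\mathcal H)<+\infty$, so the key hypothesis of \cref{main} is satisfied; the remaining hypotheses (non-emptiness of $\text{dom }\hat F_N$ with probability $1$, and $x^\star(\xi^{[N]})\in\text{dom }\hat F_N$ with probability $1$) hold under the standing assumptions from the introduction and by the way $x^\star(\xi^{[N]})$ is defined.

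Next I would invoke \cref{main} with $d=d_{VC}(\mathcal H)$, which yields $\mathbb P^N(V(x^\star(\xi^{[N]}))>\epsilon)\le\delta$ as soon as $N\ge \frac{4}{\epsilon}\big(d_{VC}(\mathcal H)\log(\frac{12}{\epsilon})+\log(\frac{2}{\delta})\big)$. The final step is to observe that the threshold \eqref{sc} is nondecreasing in the VC-dimension parameter, since the coefficient $\frac{4}{\epsilon}\log(\frac{12}{\epsilon})$ multiplying it is positive for $0<\epsilon<1$; hence replacing $d_{VC}(\mathcal H)$ by the explicit upper bound $d^\star$ only enlarges the threshold. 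Consequently any $N$ satisfying the bound stated in the corollary a fortiori satisfies the hypothesis of \cref{main}, and the conclusion follows.

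I do not anticipate any genuine obstacle here: the corollary is essentially a substitution of the VC estimate of \cref{l1} into the sample-complexity bound of \cref{main}. The only point that needs to be stated rather than computed is the monotonicity of \eqref{sc} in $d$, which is what legitimizes using the upper bound $d^\star$ in place of the exact (and possibly smaller) VC dimension of $\mathcal H$.
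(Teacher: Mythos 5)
Your proposal is correct and follows exactly the route the paper intends: Corollary~\ref{f} is obtained by substituting the VC-dimension bound of \cref{l1} into the sample-complexity condition of \cref{main}, with the (stated, and valid) monotonicity observation justifying use of the upper bound $d^\star$ in place of the exact VC dimension.
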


\cref{f} provides a sample complexity $O(\frac{m}{\epsilon}\log m \log (\frac{1}{\epsilon})+\frac{1}{\epsilon} \log(\frac{1}{\delta}))$ for chain-constrained domains, or $O(\frac{m}{\epsilon}\log m +\frac{1}{\epsilon} \log(\frac{1}{\delta}))$ using the PAC bounds from \cite{vcnew,vcnew0}, while Scenario 1 in \cite{liu2019feasibility} provides a $O(\frac{m}{\epsilon}+\frac{1}{\epsilon} \log(\frac{1}{\delta}))$ bound according to \eqref{scomple}. As we can see, the more refined analysis on the chain-constrained structure in \cite{liu2019feasibility} leads to a better rate over \cref{f} by log factors. However, the generality offered by \cref{main} is still noteworthy, since its applicability in most situations does not hinge on the chain-constrained domain. 

\subsubsection{Finite Feasible Region}

In this subsection, we apply \cref{main} to the case where the decision set $\mathcal X$ is finite. 

\begin{corollary}\label{finite}
		Suppose $|\mathcal X|<+\infty$ and let $\xi^{[N]}=\{\xi_1,...,\xi_N\}$ be IID samples from $\mathbb P$ (consequently $\xi^{[N]}\sim \mathbb P^N$). Then, if 
	
	\begin{equation}\label{sf}
		N \geq \frac{4}{\epsilon}\Big(\log_2(|\mathcal X|+1)\cdot\log\big(\frac{12}{\epsilon}\big)+\log\big(\frac{2}{\delta}\big)\Big),
	\end{equation}
		we have
	\begin{equation*}
		\mathbb P^N(V(x^\star (\xi^{[N]}))>\epsilon) \leq  \delta
	\end{equation*}
	for any $0<\delta,\epsilon <1$.
\end{corollary}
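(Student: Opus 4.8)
The plan is to reduce this immediately to \cref{main} by bounding the VC dimension of $\mathcal H$ purely in terms of its cardinality. First I would observe that, since $\mathcal H=\{H_x\}_{x\in\mathcal X}\cup\{\Xi\}$ and $\mathcal X$ is finite, $\mathcal H$ contains at most $|\mathcal X|+1$ distinct subsets of $\Xi$ (some of the $H_x$ may coincide, which only helps). The key elementary fact is that a \emph{finite} class of sets cannot shatter many points: if $\{\xi_1,\dots,\xi_d\}\subseteq\Xi$ is shattered by $\mathcal H$, then by definition, for each of the $2^d$ subsets $D\subseteq\{\xi_1,\dots,\xi_d\}$ there is some $C\in\mathcal H$ that ``picks out'' exactly $D$ (i.e. $D\subseteq C$ and $(\{\xi_1,\dots,\xi_d\}\setminus D)\cap C=\emptyset$), and distinct subsets $D$ require distinct sets $C$. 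Hence $2^d\le|\mathcal H|\le|\mathcal X|+1$, so every shattered set has cardinality at most $\log_2(|\mathcal X|+1)$, giving $d_{VC}(\mathcal H)\le\log_2(|\mathcal X|+1)<+\infty$.

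With this bound in hand, the remaining step is simply to invoke \cref{main} with $d=\log_2(|\mathcal X|+1)$: since $N$ satisfies \eqref{sf}, which is exactly \eqref{sc} for this value of $d$, we conclude $\mathbb P^N(V(x^\star(\xi^{[N]}))>\epsilon)\le\delta$, and more strongly $\mathbb P^N(\sup_{x\in\text{dom }\hat F_N}V(x)>\epsilon)\le\delta$. (Strictly, $d_{VC}(\mathcal H)$ is an integer $\le\lfloor\log_2(|\mathcal X|+1)\rfloor$, and \eqref{sc} is monotone increasing in $d$, so using the possibly larger value $\log_2(|\mathcal X|+1)$ only makes the sample-size requirement more conservative; the non-emptiness of $\text{dom }\hat F_N$ with probability $1$ holds by the standing assumption as in \cref{main}.) There is no real obstacle here — the only nontrivial ingredient is the counting bound $d_{VC}\le\log_2|\mathcal H|$, and everything else is a direct specialization of the main theorem; measurability is inherited verbatim from the discussion accompanying \cref{main}.
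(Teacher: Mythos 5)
Your proposal is correct and follows essentially the same route as the paper: bound $d_{VC}(\mathcal H)\le\log_2|\mathcal H|\le\log_2(|\mathcal X|+1)$ for the finite class $\mathcal H=\{H_x\}_{x\in\mathcal X}\cup\{\Xi\}$ and then invoke \cref{main}. The only difference is that you spell out the counting argument (shattering $d$ points forces $2^d\le|\mathcal H|$) and the monotonicity remark, which the paper simply cites as a known fact.
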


\begin{proof}
	Let $\mathcal H\triangleq \{H_x\}_{x\in\mathcal X}\cup\{\Xi\}$ be the class of subsets defined in \eqref{domm}. It follows that $|\mathcal H|\leq |\mathcal X|+1<+\infty$. It is known that if $|\mathcal H|<+\infty$, then $d_{VC}(\mathcal H)\leq \log_2 |\mathcal H|$ (by definition of the VC dimension or see \cite{anthony1997computational}). The result then follows from \cref{main}.
\end{proof}

Note that since the VC dimension of a finite hypothesis class is bounded by the logarithm of its cardinality, we get the results in \cref{finite} for free. In Section 4 of \cite{chen2019sample}, the case of finite feasible region $\mathcal X$ is also discussed, with a slightly different focus. In particular, with assumptions on the moment generating functions, \cite{chen2019sample} proves the exponential convergence of a $\delta$-optimal set towards an $\epsilon$-optimal set using large deviations (LD) theory. The rate of convergence also depends on constants from the LD analysis. However, \cite{chen2019sample} also offers a more direct analysis on the feasibility of SAA solution $x^\star(\xi^{[N]})$ when $|\mathcal X|<+\infty$ which does not rely on distributional assumptions of $f(\xi,x)$. To be specific, Lemma 9 of \cite{chen2019sample} states:
\begin{equation}\label{chen}
	\mathbb P^N(\hat F_N(x)<+\infty)\leq (1-\eta)^N, \text { for } x \in\mathcal X^{Infea}
\end{equation}
where $\mathcal X^{Infea}=\{x: x\in\mathcal X \text{ and } V(x)>0\}$ and $\eta=\min\{V(x): x\in\mathcal X^{Infea}\}$. Building on \eqref{chen}, we can deduce the following direct bound regarding $x^\star(\xi^{[N]})$:
\begin{align}\label{fin}
\mathbb P^N(V(x^\star(\xi^{[N]}))>\eta)&=\mathbb P^N(x^\star(\xi^{[N]})\in\mathcal X^{Infea})\nonumber\\
&\leq \mathbb P^N \big(\bigcup_{x\in\mathcal X^{Infea}} \{\hat F_N(x)<+\infty\}\big)\nonumber\\
&\leq \sum_{x\in\mathcal X^{Infea}} \mathbb P^N(\hat F_N(x)<+\infty) \leq |\mathcal X^{Infea}|(1-\eta)^N,
\end{align}
which leads to a $O(\frac{1}{\eta}\log(|\mathcal X|)+\frac{1}{\eta}\log(\frac{1}{\beta}))$ sample complexity, comparable to the $O(\frac{1}{\eta}\log(\frac{1}{\eta})\log_2(|\mathcal X|)+\frac{1}{\eta}\log(\frac{1}{\beta}))$ complexity in \eqref{sf}. Moreover, if we utilize the PAC bound from \cref{t1r}, the bound in \eqref{sf} could be improved to $O(\frac{1}{\eta}\log_2(|\mathcal X|)+\frac{1}{\eta}\log(\frac{1}{\beta}))$ which is of the same order as \eqref{fin}.

 \appendix
 
%  , by comparing \cref{3-stage}which is possible because the bound in \cref{3-stage} is explicit. It is designed so that , which is particularly helpful for verifying the results of \cref{3-stage} . 
 
 \section{A Verifiable Example}
In this section, we validate the result of  \cref{3-stage} on an artificial example that is specially constructed so that the optimal SAA solution $x^\star(\xi^{[N]})$ and its violation probability $V(x^\star(\xi^{[N]}))$ can be calculated analytically. This then allows us to directly check the validity of our bound in \cref{3-stage}. Note that bounds obtained from computational learning theory and the VC dimension can be crude for small or midsize problems, and they become more interesting when $N,n $ are large and $\epsilon$ is tiny. However, in the latter large-scale setting, it can be difficult to verify experimentally the quality of the bounds since an accurate estimation of the violation probability is costly or even infeasible. To overcome this difficulty, we build this example that exhibits a closed form so that we can directly check our bound's validity for any combination of $N,n$ (arbitrarily large) and $\epsilon$ (arbitrarily small).
%  for arbitrarily chosen $N$, $n$ and $\epsilon$. 
% , for any combination of $N,n$ (arbitrarily large) and $\epsilon$ (arbitrarily small). , unless a closed-form evaluation exists. Our example here is precisely one that has a closed form which serves to provide a direct check on our bound's validity.

% Specifically, we are interested in the verification of the bounds asymptotically, when $N,n \rightarrow \infty$ and $\epsilon\rightarrow 0$, because the bound of \cref{3-stage} can become crude or even trivial for small or midsize problems. On the other hand, however, for a large-scale problem, an accurate estimation of the violation probability is generally costly or even infeasible, unless a closed-form evaluation exists.  

Our example is a modification of the two-stage resource planning (TRP) problem in \cite{chen2019sample} (see also \cite{liu2016decomposition,luedtke2014branch}). The example has the form:
\begin{equation}\label{num_exp}
    \inf_{x\geq 0, x\in\mathbb R^n} F(x) \triangleq c^Tx+\mathbb E [f(\xi,x)],
\end{equation}
where $c> \boldsymbol{0}\in\mathbb R^n$ and
\begin{align}
\begin{aligned}\label{formulation}
		f(\xi,x) \triangleq & \inf_{y\in\mathbb R^n}  q^T y \\
		 \textrm{s.t.} \quad &  y \leq x-\xi,\\
		& y\geq0,    \\
	\end{aligned}
\end{align}
 where $q> \boldsymbol{0}\in\mathbb R^n$ and $\xi\in\mathbb R^n$ with each entry $[\xi]_i$ independently and uniformly distributed on $[0,1]$. We summarize our results into the following corollary.
 
 \begin{corollary}
         Let $\xi^{[N]}=\{\xi_1,...,\xi_N\}$ be IID samples from $\mathbb P$ (consequently $\xi^{[N]}\sim \mathbb P^N$). Then the optimal SAA solution of the example in \eqref{num_exp} satisfies 
         \begin{equation}\label{stat1}
             \mathbb P^N( V(x^\star(\xi^{[N]})>\epsilon) = \sum_{i=0}^{n-1}\frac{(N\log(\frac{1}{1-\epsilon}))^i}{i!} e^{-N\log(\frac{1}{1-\epsilon})}.
         \end{equation}
        Moreover, compared with the bound in \cref{3-stage}, we have 
        \begin{equation}\label{stat2}
       \sum_{k=0}^{n-1}\frac{(N\log(\frac{1}{1-\epsilon}))^k}{k!} e^{-N\log(\frac{1}{1-\epsilon})}\leq 2 \exp \left(-\frac{N\epsilon}{4}\right) \left(\frac{12}{\epsilon}\right)^{d_{VC}}.
        \end{equation}
        for any integer $N,n\geq 1$ and $\epsilon\in(0,1)$.
 \end{corollary}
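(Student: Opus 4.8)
The plan is to prove the two displayed claims separately. The identity \eqref{stat1} is obtained by solving the SAA problem in closed form and computing the violation probability of its optimizer; the bound \eqref{stat2} then follows by combining \eqref{stat1} with \cref{3-stage} applied to this instance, once the relevant VC quantity $d_{VC}$ has been identified.

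For \eqref{stat1}, I would first note that since $q>\boldsymbol 0$, the inner minimization in \eqref{formulation} is feasible if and only if $x\geq\xi$ componentwise, in which case the optimal $y$ is $\boldsymbol 0$ and $f(\xi,x)=0$; otherwise $f(\xi,x)=+\infty$. Hence $\text{dom }\hat F_N=\{x\in\mathbb R^n_+ : x\geq\xi_i \text{ for all } i\in[N]\}=\{x\in\mathbb R^n : x\geq M\}$ with $[M]_j\triangleq\max_{i\in[N]}[\xi_i]_j$, and $\hat F_N(x)=c^Tx$ on this set. Since $c>\boldsymbol 0$, the unique SAA optimizer is $x^\star(\xi^{[N]})=M$. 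Regarding $M$ as fixed, a fresh sample $\xi$ has independent uniform coordinates, so $V(M)=\mathbb P(\exists j:[\xi]_j>[M]_j)=1-\prod_{j=1}^n[M]_j$; consequently, under $\mathbb P^N$, the event $\{V(x^\star(\xi^{[N]}))>\epsilon\}$ coincides with $\{\sum_{j=1}^n(-\log[M]_j)>\log(\frac{1}{1-\epsilon})\}$.

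The distributional heart is to identify the law of $\sum_{j=1}^n(-\log[M]_j)$. Each $[M]_j$ is the maximum of $N$ i.i.d.\ $\mathrm{Unif}[0,1]$ variables, so $\mathbb P([M]_j\leq t)=t^N$, whence $-\log[M]_j$ is exponential with rate $N$; moreover the $n$ coordinates are mutually independent, being built from disjoint blocks of the sample. Therefore the sum is Gamma with shape $n$ and rate $N$ (Erlang), whose survival function at $s=\log(\frac{1}{1-\epsilon})$ is exactly $\sum_{i=0}^{n-1}\frac{(Ns)^i}{i!}e^{-Ns}$, which is \eqref{stat1}. For \eqref{stat2}, I would put \eqref{formulation} into the equality form of \eqref{ex1} via a slack $s\geq\boldsymbol 0$ with $y+s=x-\xi$; then the recourse matrix is the deterministic $W_\xi=[\,I_n\,|\,I_n\,]$, and the cone $\{a:a^TW_\xi\geq0\}=\mathbb R^n_+$ is pointed with exactly the $n$ extreme rays $e_1,\dots,e_n$, so $|J|=n<+\infty$, while the convexity and finiteness requirements on $g$ hold trivially since $g$ is linear. \cref{3-stage} then applies with $d_{VC}=\frac{e}{(e-1)\log2}\,n(n+1)\log(\frac{en}{\log2})$ and yields $\mathbb P^N(V(x^\star(\xi^{[N]}))>\epsilon)\leq 2\exp(-\frac{N\epsilon}{4})(\frac{12}{\epsilon})^{d_{VC}}$; combined with \eqref{stat1} this is exactly \eqref{stat2}.

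I do not expect a genuine obstacle — the argument is elementary once organized — but two points deserve care. The first is verifying that the SAA optimizer is \emph{exactly} the coordinatewise sample maximum $M$ rather than merely some feasible point of $\text{dom }\hat F_N$, which is what makes the closed form \eqref{stat1} available; this rests on $\hat F_N$ reducing to the strictly increasing linear function $c^Tx$ on the SAA feasible region. The second is the careful bookkeeping of the two standard facts used above: that $-\log$ of the maximum of $N$ i.i.d.\ uniforms is $\mathrm{Exponential}(N)$, and that a sum of $n$ independent such variables has the Erlang survival function displayed. With \eqref{stat1} in hand, \eqref{stat2} is immediate from \cref{3-stage}, the only substantive input being the extreme-ray count $|J|=n$ of the cone $\mathbb R^n_+$, which fixes $d_{VC}$.
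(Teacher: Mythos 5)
Your treatment of \eqref{stat1} is essentially identical to the paper's: closed-form SAA optimizer $[x^\star(\xi^{[N]})]_j=\max_{i\in[N]}[\xi_i]_j$, violation probability $1-\prod_j[x]_j$, the $-\log(\max)$ of uniforms being $\mathrm{Exp}(N)$, and the Erlang$(n,N)$ survival function evaluated at $\log\frac{1}{1-\epsilon}$; this part is correct and matches the paper.

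Your route to \eqref{stat2} is genuinely different, and the difference is worth flagging. You deduce \eqref{stat2} by applying \cref{3-stage} to this instance (via the slack reformulation $y+s=x-\xi$, $W=[\,I\,|\,I\,]$, cone $\mathbb R^n_+$, $|J|=n$, hence $d_{VC}=\frac{e}{(e-1)\log 2}\,n(n+1)\log\big(\frac{en}{\log 2}\big)$) and combining with the equality \eqref{stat1}. As a derivation of the displayed inequality this is logically sound: your identification of $|J|$ and $d_{VC}$ is correct, the optimal SAA solution lies in $\text{dom }\hat F_N$, and the convergence-rate form of \cref{3-stage} holds for every $N$. However, the appendix exists precisely to give an \emph{independent} check of \cref{3-stage} on an example where the left-hand side is computable in closed form; deriving \eqref{stat2} from \cref{3-stage} itself makes that verification vacuous, since it assumes the very bound being validated. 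The paper instead proves \eqref{stat2} without invoking the VC machinery: it uses $\log\frac{1}{1-\epsilon}>\epsilon$ to bound the left side by $\sum_{i=0}^{n-1}\frac{(N\epsilon)^i}{i!}e^{-N\epsilon}$, sets $\theta=N\epsilon$, and reduces to the deterministic claim $\sum_{i=0}^{n-1}\frac{\theta^i}{i!}e^{-\frac{3}{4}\theta}\le 2\big(\frac{12}{\epsilon}\big)^n$ for all $\theta\ge 0$ (only $d_{VC}\ge n$ is needed), which it establishes by a stationary-point analysis of $h(\theta;n)$ together with Stirling's formula. That analytic argument is the substantive content of the second claim and is absent from your proposal: your version buys brevity but provides no check of the VC bound, whereas the paper's version yields an unconditional, self-contained inequality that genuinely validates \cref{3-stage} on this example.
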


\begin{proof}
    It is straightforward to check that the optimal solution of the SAA problem satisfies 
    \begin{equation}\label{SAAso}
        [x^\star(\xi^{[N]})]_{i}=\max_{j\in[N]} \text{ } [\xi_j]_i
    \end{equation}
    where $i\in[n]$. Moreover, given $\boldsymbol{0}\leq x \leq \boldsymbol{1} \in\mathbb R^n$, since each entry of $\xi$ is independently and uniformly distributed on $[0,1]$, we have
    \begin{align}\label{vio}
        V(x)=&\mathbb P( \cup_{i\in [n]} \text{ } [\xi]_i>[x]_i) \nonumber\\
        =& 1-\mathbb P( \cap_{i\in [n]} \text{ } [\xi]_i\leq [x]_i) \nonumber\\
        =& 1-\prod_{i\in[n]}[x]_i.
    \end{align}
On the other hand, it is known that if $U$ is a uniform random variable on $[0,1]$, then $-\log(U)\sim exp(1)$ follows an exponential distribution with parameter $1$. Moreover, it is known that the minimum of a series of independent exponential distribution $\{V_j\sim exp(\lambda_j)\}_{j\in[N]}$ follows another exponential distribution $exp (\sum_{j\in[N]}\lambda_j)$. Thus, if $\{U_j\}_{j\in[N]}$ is a series of independent and uniform random variables on $[0,1]$, then 
\begin{equation}\label{exp}
    -\log(\max_{j\in[N]} U_j)=\min_{j\in[N]} -\log U_j \sim exp(N)
\end{equation}
follows an exponential distribution with parameter $N$. Finally, it is also known that if we have $n$ independent exponential random variables $\{W_i\}_{i\in[n]}$ with parameter $N$, then their sum:
\begin{equation}\label{gamma}
    \sum_{i\in[n]} W_i \sim gamma (n,N)
\end{equation}
follows a gamma distribution with parameter $(n,N)$ and cumulative distribution function:
\begin{equation}\label{gammacdf}
    F(x; n,N)=1-\sum_{i=0}^{n-1}\frac{(Nx)^i}{i!} e^{-Nx}.
\end{equation}
Now, it follows from \eqref{SAAso} and  \eqref{vio} that
\begin{align*}
    \mathbb P^{N} (V(x^\star(\xi^{[N]}))>\epsilon)=&\mathbb P^{N}( 1-\prod_{i\in[n]}\max_{j\in[N]}[\xi_j]_i > \epsilon) \nonumber\\
    =&\mathbb P^N (\sum_{i\in[n]}-\log(\max_{j\in[N]}[\xi_j]) > \log(\frac{1}{1-\epsilon}))\nonumber\\
    =&\sum_{i=0}^{n-1}\frac{(N\log(\frac{1}{1-\epsilon}))^i}{i!} e^{-N\log(\frac{1}{1-\epsilon})}.
\end{align*}
This proves \eqref{stat1}. To prove the \eqref{stat2}, note that since $\log(\frac{1}{1-\epsilon})>\epsilon$ for $\epsilon \in (0,1)$, it follows from \eqref{exp}, \eqref{gamma} and \eqref{gammacdf} that 
\begin{align*}
    \mathbb P^{N} (V(x^\star(\xi^{[N]}))>\epsilon)=&\mathbb P^N (\sum_{i\in[n]}-\log(\max_{j\in[N]}[\xi_j]) > \log(\frac{1}{1-\epsilon}))\nonumber\\
    \leq & \mathbb P^N (\sum_{i\in[n]}-\log(\max_{j\in[N]}[\xi_j]) >\epsilon) \nonumber\\
    =& 1-F(\epsilon; n,N) \nonumber\\
    = &\sum_{i=0}^{n-1}\frac{(N\epsilon)^i}{i!} e^{-N\epsilon}.
\end{align*}
Now, to prove \eqref{stat2}, it suffices to prove $$2\exp \left(-\frac{N\epsilon}{4}\right) \left(\frac{12}{\epsilon}\right)^{d_{VC}}\geq \sum_{i=0}^{n-1}\frac{(N\epsilon)^i}{i!} e^{-N\epsilon}.$$ First note from the formulation \eqref{formulation} and \eqref{dvc} that $d_{VC}\geq n$. Then, if we define $\theta\triangleq N\epsilon$, it suffices to show
\begin{equation}\label{last}
    2\left(\frac{12}{\epsilon}\right)^{n} \geq h(\theta; n)\triangleq \sum_{i=0}^{n-1}\frac{(\theta)^i}{i!} e^{-\frac{3}{4}\theta}
\end{equation}
for any integer $n\geq 1$, $\theta\geq 0$ and $\epsilon\in(0,1)$. We prove \eqref{last} by contradiction. First note \eqref{last} is trivially true when $n=1$. Suppose \eqref{last} is not true, then there exists $n\geq 2$ and $\epsilon\in(0,1)$ such that 
\begin{equation}\label{llast}
    \sup_{\theta\geq 0} h(\theta; n) > 2\left(\frac{12}{\epsilon}\right)^{n}
\end{equation}
However, when $\theta\in[0,1]$, it is clear that 
\begin{equation*}
    \sup_{\theta\geq 0} h(\theta; n) \leq \sum_{i=0}^{n-1}\frac{1}{i!} e^{0} \leq e < 2\left(\frac{12}{\epsilon}\right) \leq 2\left(\frac{12}{\epsilon}\right)^n
\end{equation*}
On the other hand, when $\theta\rightarrow\infty$, the exponential decay dominates and we have $\lim_{\theta\rightarrow\infty}h(\theta; n)\rightarrow 0$. Consequently, there must exists some $\theta_{u}$ such that 
\begin{equation*}
    \sup_{\theta\geq \theta_{u}} h(\theta; n) \leq 2\left(\frac{12}{\epsilon}\right)^n.
\end{equation*}
Combined with \eqref{llast}, the above inequalities implies $ \sup_{1\leq \theta \leq \theta_{u}} h(\theta; n) > 2\left(\frac{12}{\epsilon}\right)^n$. By the extreme value theorem, there exists a local maximum $\theta^\star\in[1,\theta_u]$ such that
\begin{equation}\label{lllast}
    h(\theta^\star ; n)> 2\left(\frac{12}{\epsilon}\right)^n,
\end{equation}
and $h'(\theta^\star ; n)=0$. Solving for $h'(\theta^\star ; n)=0$, we obtain that
\begin{equation*}
    \sum_{i=0}^{n-2}\frac{(\theta^\star)^i}{i!} =\frac{3(\theta^\star)^{n-1}}{(n-1)!} \quad\text{ and }\quad h(\theta^\star; n) = \frac{4(\theta^\star)^{n-1}}{(n-1)!}e^{-\frac{3}{4}\theta^\star}. 
\end{equation*}
However, it can be checked that, for any integer $m\geq 1$, 
\begin{equation*}
 \max_{\theta\geq 0} \frac{4\theta^{m}}{m!}e^{-\frac{3}{4}\theta}=4(\frac{4}{3})^m\frac{m^m}{m!}e^{-m}.
\end{equation*}
 Then, using Stirling’s formula (see \cite{conrad2016stirling}) we have $m!\geq {m^m}{e^{-m}}\sqrt{2\pi m}$ and
 \begin{equation}\label{inter}
     \max_{\theta\geq 0} \frac{4\theta^{m}}{m!}e^{-\frac{3}{4}\theta} \leq 2\left(\frac{4}{3}\right)^m<2\left(\frac{12}{\epsilon}\right)^m.
 \end{equation}
 Now, using the result in \eqref{inter}, we have 
\begin{equation*}
    h(\theta^\star ; n) \leq \max_{\theta\geq 0} \frac{4\theta^{n-1}}{(n-1)!}e^{-\frac{3}{4}\theta}<2\left(\frac{12}{\epsilon}\right)^{n-1}<2\left(\frac{12}{\epsilon}\right)^n
\end{equation*}
which contradicts with \eqref{lllast}. This concludes the proof.
\end{proof}

\section{Technical Discussion on Measurability}

As discussed in \cite{korf2001random} or \cite{shapiro2014lectures}, an extended real valued function $f(\xi(\omega),x):\Omega\times\mathcal X\rightarrow\mathbb R\cup \{\infty\}$ defined on $\Omega\times\mathcal X$ equipped with a $\mathbb P$-complete measure and $\sigma$-algebra $\mathcal F \otimes \mathscr B$ where $\mathcal X$ is a Polish space (a complete, separable, metric space) is a random lower semicontinuous function if and only if 
\begin{enumerate}
    \item $f$ is $\mathcal F \otimes \mathscr B$-measurable,
    \item for every $\omega\in\Omega$, $f(\xi(\omega),\cdot)$ is lower semicontinuous (i.e., $\liminf_{x\rightarrow x_0} f(\xi,x) \geq f(\xi,x_0)$ for all $x_0\in\mathcal X$),
\end{enumerate}
which is the condition specified in the introduction following \eqref{sp}. Note that when $f: \mathcal X \rightarrow \mathbb R$, a notable class of random lower semicontinuous functions is given by the Carath\'eodory functions (also a standard setup in SAA literature \cite{shapiro2014lectures}), which simply requires $f$'s continuity in $x$ and measurability in $\xi$. However, for $f: \mathcal X \rightarrow \mathbb R\cup\{\infty\}$, lower semicontinuity in $x$ and measurability in $\xi$ are not enough for $f$ to be random lower semicontinuous. One must also require $f$ to be $\mathcal F\otimes\mathscr B$-measurable \cite{shapiro2014lectures,korf2001random}.

Given some $N \geq 1$ and the random lower semicontinuous function $f$, the sum of lower semicontinuous $\hat F_N$ is also a random lower semicontinuous since the sum is always well-defined (there is no indeterminate form $-\infty+\infty$). Consequently, as shown in Theorem 7.36 and Theorem 7.37 of \cite{shapiro2014lectures}, the optimal solution $x^\star(\xi^{[N]})$ is measurable. On the other hand, fixing any $x_0\in\mathcal X$, by Fatou's lemma
\begin{align*}
    \liminf_{x\rightarrow x_0}V(x)=\liminf_{x\rightarrow x_0}\mathbb E \mathbf{1}\{f(\xi,x)=\infty\} \geq &\mathbb E \liminf_{x\rightarrow x_0} \mathbf{1}\{f(\xi,x)=\infty\}\nonumber\\
    \geq & \mathbb E \mathbf{1}\{f(\xi,x_0)=\infty\}=V(x_0),
\end{align*}
where the last inequality follows from the lower semicontinuity of $\mathbf{1}\{f(\xi,\cdot)=\infty\}$ due to the lower semicontinuity of $f(\xi,\cdot)$ and the non-negativity of the indicator function. Thus, $V(\cdot)$ is a lower semicontinuous thus $\mathscr B$-measurable function on $\mathcal X$. It then follows $V(x^\star(\xi^{[N]}))$ is $\mathcal F$-measurable, suggesting \eqref{mea2} is well-defined. 

For \eqref{meaissue}, we first note that for any $\epsilon>0$, the set $\{\sup_{x\in\text{dom } \hat F_N} V(x) > \epsilon\}$ is a.e. equivalent to $\big\{\sup_{x\in\mathcal X}\big( V(x)\cdot\prod_{i=1}^N \mathbf{1}\{f(\xi_i,x)<\infty\}\big)  > \epsilon\big\}$. It follows from the $\mathcal F \otimes \mathscr B$ measurability of $f$ and the fact that product of non-negative lower semicontinuous functions (i.e., $\mathbf{1}\{f(\xi,\cdot)=\infty\}$) is also lower semicontinuous, that 
\begin{equation}\label{lcv}
    V(x)\cdot\prod_{i=1}^N \mathbf{1}\{f(\xi_i,x)<\infty\}
\end{equation}
 is a random lower semicontinuous function. Thus, fixing any $x\in\mathcal X$, the set $\big\{\big( V(x)\cdot\prod_{i=1}^N \mathbf{1}\{f(\xi_i,x)<\infty\}\big)  > \epsilon\big\}$ is $\mathbb P$-measurable.   Now, let $\{x_j\}_{j\in\mathbb N}$ be a countable dense subset of $\mathcal X$, the lower semicontinuity of \eqref{lcv} implies $$\big\{\sup_{x\in\mathcal X}\big( V(x)\cdot\prod_{i=1}^N \mathbf{1}\{f(\xi_i,x)<\infty\}\big)  > \epsilon\big\}$$ is a.e.  equivalent to
 \begin{equation*}
     \big\{\sup_{j\in\mathbb N}\big( V(x_j)\cdot\prod_{i=1}^N \mathbf{1}\{f(\xi_i,x_j)<\infty\}\big)  > \epsilon\big\}=\bigcup_{j\in\mathbb N}\big\{\big( V(x_j)\cdot\prod_{i=1}^N \mathbf{1}\{f(\xi_i,x_j)<\infty\}\big)  > \epsilon\big\},
 \end{equation*}
which is $\mathbb P$-measurable. The completeness of $\mathbb P$ now suggests the measurability of \eqref{meaissue}.

\section*{Acknowledgments}
We greatly thank Alexander Shapiro for introducing to us the motivating problem and the helpful suggestions, and James Luedtke for the helpful feedback in the preparation of this manuscript.

\bibliographystyle{siamplain}
\bibliography{references}
\end{document}